\documentclass{amsart}
\usepackage{ifthen}
\usepackage{enumerate}
\usepackage{color}
\usepackage{verbatim}
\setcounter{secnumdepth}{1}
\numberwithin{equation}{section}
\def\epsilon{\varepsilon}
\def\Log{\rm{Log}}
\def\downto{\searrow}

\def\de{\delta}
\def\si{\sigma}
\def\ga{\gamma}
\def\lap{\Delta}
\def\ti{\tilde}

\def\ga{\gamma}

\def\beq{\begin{eqnarray}}
  \def\eeq{\end{eqnarray}}

\def\al{\alpha}
\def\ep{\epsilon}

\def\partt{\frac{\partial }{\partial t} }

\def\phi{\varphi}
\def\R{\mathbb R}
\def\C{\mathbb C}
\def\boundary{\partial}
\def\N{\mathbb N}

\def\part{\partial}

\def\curlS{\mathcal S}
 
\def\grad{\nabla}

\def\counterword#1{%
  \ifthenelse{\ref{#1}=1}{one}{}%
  \ifthenelse{\ref{#1}=2}{two}{}%
  \ifthenelse{\ref{#1}=3}{three}{}%
  \ifthenelse{\ref{#1}=4}{four}{}%
  \ifthenelse{\ref{#1}=5}{five}{}%
  \ifthenelse{\ref{#1}=6}{six}{}%
}

\newtheorem{theorem}{Theorem}[section]
\newtheorem{lemma}[theorem]{Lemma}
\newtheorem{corollary}[theorem]{Corollary}

\theoremstyle{definition}

\theoremstyle{remark}

\definecolor{alert}{rgb}{0.8,0,0.3}
\newcommand{\alert}[1]{%
	\marginpar{%
		\ifodd\value{page} \raggedright \else \raggedleft \fi
		\footnotesize{\textcolor{alert}{#1}}
	}
}

\begin{document}
\title[Local estimates and uniqueness for the bi-harmonic heat flow]{Some local estimates and a uniqueness result for the entire biharmonic heat equation}
\author{Miles Simon}
\address{Miles Simon:
Institut f\"ur Analysis und Numerik (IAN), Universit\"at Magdeburg, Universit\"atsplatz 2, 39106 Magdeburg, Germany}
\email{miles point simon at ovgu point de}
\author{Glen Wheeler}
\address{Glen Wheeler:
Institut f\"ur Analysis und Numerik (IAN), Universit\"at Magdeburg, Universit\"atsplatz 2, 39106 Magdeburg, Germany}
\curraddr{Institute for Mathematics and its Applications, University of Wollongong, Wollongong 2522, Australia}
\email{glenw@uow.edu.au}
\subjclass[2010]{35B45, 35K25, 35K35}
\date{February 2013}

\dedicatory{}

\keywords{fourth-order parabolic partial differential equations, heat flow, local estimates, uniqueness}

\begin{abstract}
We consider smooth solutions to the biharmonic heat equation on
$\R^n\times [0,T] $ for
which the square of the
Laplacian at time $t$ is globally bounded from above by $k_0/t$  for some $k_0$ in
$\R^+$, for all $t \in [0,T]$.
We prove local, in space and time, estimates for such solutions. We
explain how these estimates imply uniqueness of smooth solutions in
this class.
\end{abstract}

\maketitle
\section{Introduction}
In this paper we prove local in space and time estimates for solutions  $u:\R^n\times[0,T]\to\R$ of the biharmonic heat
flow,
\begin{align} 
\partt u  = -\lap^2 u, \label{bi}
\end{align}
assuming that we have some global in time control on how the solution
behaves as $t\searrow 0$. 
This control takes the form
\begin{equation}
\label{EQgc}
t  |\lap u|^2(x,t)  \leq k_0 < \infty
\end{equation}
for $t \in [0,T]$ for all $x \in \R^n$ for some fixed $k_0 \in \R^+$.
This means that it is possible for $ |\lap u|^2(x,t)$ to approach infinity  as $t \searrow 0$, but if it does so, then
we have some control over the rate at which this occurs.
Here $\lap u$ refers to the spatial Lapacian of $u$, $\lap u (x,t) =
\sum_{i=1}^n \grad_i \grad_i u(x,t)$ where $\grad_i u(x,t)$ is the partial
derivative of $u$ with respect to $x_i$.

The growth condition \eqref{EQgc} is natural in the following senses.
It is scale invariant: see the
explanation of the scale invariance of $b$ just after the
the definition of \eqref{assum1} in Section \ref{STblowup}.
This behaviour also does occur in an asymptotic sense. That is, it is
possible to construct a solution  $ u \in C^{\infty}( \R^n \times (0,T))$ and to find points $x(t) \in
\R^n $ for all $t>0$ such that 
$|\lap u|^2 (x(t),t)   = \frac{k_0}{t}$ for some fixed $k_0 >0$ for
all  $t>0$. 
We also find points $y(t) \in \R^n$ for all $t>0$ such that
$(\lap^2 u) (y(t),t)   = \frac{k_0}{t}$ for some fixed $k_0 \neq 0$ for
all  $t>0$. 
That is, the speed of $u$ is not integrable in time.
See the example in Section \ref{STexample}.

Our first result is the following local estimate.
\begin{theorem}\label{mainintro1}
Let $u:\R^n \times [0,T] \to \R$, $T < \infty,$  be a smooth solution to \eqref{bi}
that satisfies
\begin{align} 
 | \lap u |^2(x,t)  \leq \frac{k_0}{t} \label{introA1}
\end{align} 
for some $k_0 \in \R$, for all $t \in [0,T]$, all $x \in \R^n$ and
\begin{align*}
\sup_{B_1(0)} \sum_{i=0}^{2n+2} |\grad^i u_0|^2 \leq k_1< \infty,
\end{align*}
for some $k_1 \in \R$, where $u_0(\cdot) := u(\cdot,0)$. 
Then there exists an $N = N(n,k_0,k_1) >0$ such that
\begin{align*} 
|\lap u|^2(x,t) \leq \frac{N}{(1-|x|)^4}
\end{align*} 
for all $x$, $t$ which satisfy $x \in \overline B_1(0)$, $\,(1-|x|)^4 \geq
Nt$, and $\,t \leq  \frac1N$ , $t \leq T$.
\end{theorem}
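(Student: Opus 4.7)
The plan is a localized Bernstein-type argument adapted to the fourth-order parabolic operator $\partial_{t}+\Delta^{2}$. Fix a smooth non-negative cutoff $\phi\colon\R^n\to\R$ supported in $B_{1}(0)$ with $\phi(x)$ comparable to $1-|x|$ near $\partial B_{1}$ and $|\nabla^{k}\phi|\leq C_{k}\phi^{1-k}$ for $0\leq k\leq 4$, so that the weight $\phi^{4}$ reproduces the boundary scaling $(1-|x|)^{4}$ of the target estimate. I would then work with a test quantity of the form
\[
F(x,t)\;=\;\phi^{4}(x)\,|\Delta u|^{2}(x,t)\;+\;\sum_{j=0}^{J}A_{j}\,\phi^{4+2j}(x)\,|\nabla^{j}u|^{2}(x,t),
\]
with constants $A_{j}=A_{j}(n,k_{0},k_{1})$ and $J$ a finite integer to be fixed in the argument. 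The auxiliary weighted lower-order terms serve two roles: they ensure that $F(\cdot,0)$ is controlled on $B_{1}$ by a constant depending only on $n$ and $k_{1}$ through the initial-data hypothesis, and they are available to absorb commutator errors generated when $\Delta^{2}$ crosses $\phi^{4}$.

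The central step is a pointwise computation of $\partial_{t}F$ using $\partial_{t}\Delta u=-\Delta^{3}u$ and the rearrangement $-2\phi^{4}\,\Delta u\cdot\Delta^{3}u=-\Delta^{2}(\phi^{4}|\Delta u|^{2})+\mathrm{error}$, where every error term contains at least one derivative of $\phi$. Each such term is bounded by Cauchy-Schwarz with the $\phi$-powers balanced via $|\nabla^{k}\phi|\leq C_{k}\phi^{1-k}$, and splits into either a term already present in $F$ (modulo a small constant) or a term containing a factor of $|\Delta u|^{2}$; the latter is controlled by the global hypothesis $t|\Delta u|^{2}\leq k_{0}$, converting it into a $k_{0}/t$-bounded contribution. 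With the $A_{j}$ tuned so that the top-order cross terms produced by the evolutions of $\phi^{4+2j}|\nabla^{j}u|^{2}$ cancel the leading bad contributions (in the standard Bochner-style fashion for Bernstein estimates), the computation closes as a differential inequality of the schematic form
\[
\partial_{t}F+\Delta^{2}F\;\leq\;\tfrac{C(n,k_{0})}{t}\,F+C(n,k_{0},k_{1})
\]
valid on $B_{1}\times(0,T_{0}]$ for some $T_{0}=T_{0}(n,k_{0},k_{1})>0$.

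To extract a pointwise bound from this inequality, since $\partial_{t}+\Delta^{2}$ has no direct maximum principle, I would either apply a Moser iteration to the integrated version or, where the structure of $F$ permits, argue by a weighted comparison at an interior space-time maximum, in both cases combined with the initial bound $F(\cdot,0)\leq C(n,k_{1})$. The outcome is $\sup_{B_{1}\times[0,T_{0}]}F\leq N(n,k_{0},k_{1})$, and restricting to points with $\phi^{4}(x)\geq Nt$ converts this into the required $|\Delta u|^{2}\leq N/(1-|x|)^{4}$. The main obstacle is genuinely closing the Bernstein inequality: the commutator $[\Delta^{2},\phi^{4}]$ produces many terms involving derivatives of $u$ up to order six and of $\phi$ up to order four, and only a delicate choice of the number and weights of the auxiliary lower-order terms will make the algebra balance; the hypothesis on the initial data up to order $2n+2$ provides exactly enough regularity to feed into this bootstrap.
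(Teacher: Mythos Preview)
Your proposal has a genuine gap at the Bernstein step. The assertion that in the rearrangement $-2\phi^{4}\Delta u\cdot\Delta^{3}u=-\Delta^{2}(\phi^{4}|\Delta u|^{2})+\text{error}$ ``every error term contains at least one derivative of $\phi$'' is false. Already without the cutoff one computes
\[
(\partial_{t}+\Delta^{2})|\Delta u|^{2}
= 2|\Delta^{2}u|^{2}+4|\nabla^{2}\Delta u|^{2}+8\,\nabla\Delta u\cdot\nabla\Delta^{2}u,
\]
and the cross term $8\,\nabla\Delta u\cdot\nabla\Delta^{2}u$ involves a \emph{fifth}-order derivative of $u$ with the full weight $\phi^{4}$ and no derivative of $\phi$. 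It is not dominated by the good fourth-order squares, and it is not controlled by the hypothesis $t|\Delta u|^{2}\le k_{0}$. Adding lower-order pieces $\sum_{j\le J}A_{j}\phi^{4+2j}|\nabla^{j}u|^{2}$ does not help: the evolution of each $|\nabla^{j}u|^{2}$ under $\partial_{t}+\Delta^{2}$ produces $(j{+}4)$-th order derivatives of $u$, so the hierarchy never closes from above. This is precisely the structural difference with second-order heat flow, where $(\partial_{t}-\Delta)|\nabla u|^{2}=-2|\nabla^{2}u|^{2}\le 0$ yields a genuine pointwise subsolution; no analogue of that inequality holds here. Even if one could somehow reach an inequality of the form $(\partial_{t}+\Delta^{2})F\le\tfrac{C}{t}F+C$, the coefficient $C/t$ is not integrable at $t=0$ and $\partial_{t}+\Delta^{2}$ admits no maximum principle, so neither a Gr\"onwall argument from $F(\cdot,0)$ nor an interior-maximum argument is available; invoking Moser iteration only relocates the difficulty to weighted $L^{2}$ energy estimates.

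The paper avoids pointwise parabolic inequalities entirely. It argues by contradiction with the scale-invariant quantity $e(x,t)=(1-|x|)^{4}|\Delta u|^{2}(x,t)$: if $e$ first reaches $N$ at $(x_{0},t_{0})$, one rescales so that the new solution $v$ satisfies $|\Delta v|^{2}\le 2k_{0}$ on a ball of radius $\sim N^{1/4}$ for unit time, with $|\Delta v|^{2}(0,\tilde t_{0})=1$. The localized $L^{2}$ energy inequality of Corollary~\ref{CY2},
\[
\frac{d}{dt}\int|\Delta^{p}v|^{2}\gamma^{s}+\int|\Delta^{p+1}v|^{2}\gamma^{s}\le \frac{C}{\rho^{4p}}\int|\Delta v|^{2}\gamma^{s-4p},
\]
then forces $\int_{B_{1}}|\Delta^{l}v|^{2}\to 0$ for every $0\le l\le n+1$ as $N\to\infty$, and elliptic $L^{2}$ estimates together with Sobolev--Morrey embedding convert this into $|\Delta v|^{2}(0,\tilde t_{0})\to 0$, a contradiction. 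The integration by parts hidden in the energy estimate is what substitutes for the missing maximum principle.
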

For $i \in \N$ in the above and all that follows, $\grad^i u(x,t)$ refers to
the full $i$-th spatial derivative, and $|\grad^i u(x,t)| $ the standard norm
thereof.  For example: $\grad^2 u(x,t) = (\grad_i \grad_j u(x,t))_{i,j \in \{1,
\ldots, n\} }$ and $|\grad^2 u(x,t)|^2 = \sum_{i,j = 1}^n |\grad_i \grad_j
u(x,t)|^2$. The operator $\lap^k$ is defined by $\lap^k = (\lap)^k$. 

If we have control on other derivatives as $t \searrow 0$ then we obtain
results on higher regularity.
\begin{theorem}\label{mainintro2}
Let $s \in \N$, $s \geq 2$ and $u:\R^n \times [0,T] \to \R$, $T < \infty$  be a smooth solution to \eqref{bi}
which satisfies
\begin{equation}
\label{introA1s}
 \left( |\grad^s u| + |\grad^{s-1} u|^{s/(s-1)} +  \ldots + | \grad u |^{s}
 \right)(x,t)
\leq \frac{k_0}{ t^{s/4}}
\end{equation}
for some $k_0 \in \R$, for all $t \in [0,T]$, all $x \in \R^n$ and
\begin{align*} 
\sup_{B_1(0)} \sum_{i=0}^{2n+s+1} |\grad^i u_0|^2 \leq k_1< \infty,
\end{align*}
for some $k_1 \in \R$.
Then there exists an $N = N(n,k_0,k_1,s) >0$ such that
\begin{align*} 
 \left( |\grad^s u| + |\grad^{s-1} u|^{s/(s-1)} +  \ldots + | \grad u |^{s}
 \right)(x,t)
\leq \frac{N}{(1-|x|)^{s}}
\end{align*} 
for all $x$, $t$ which satisfy $x \in \overline B_1(0)$, $\,(1-|x|)^4 \geq
Nt$, and $\,t \leq  \frac1N$, $t \leq T$.
\end{theorem}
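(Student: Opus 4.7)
By analogy with Theorem~\ref{mainintro1}, my plan is to attack Theorem~\ref{mainintro2} via a Bernstein-type cutoff plus parabolic maximum-principle argument applied to all derivatives up to order $s$ simultaneously. Set
\[
f(x,t) := \sum_{k=1}^{s} |\grad^k u|^{s/k}(x,t),
\]
so that the hypothesis reads $f \le k_0 t^{-s/4}$ and the conclusion is its spatially-localized version. The decisive observation is that every summand has the same parabolic scaling weight $t^{-s/4}$, so a single cutoff $\eta(x)$ supported in $\overline{B_1(0)}$ with $\eta(x) \sim (1-|x|)$ near $\boundary B_1(0)$ can handle all of them through a test function of the form
\[
F(x,t) := \sum_{k=1}^s A_k\, \eta(x)^{a_k}\bigl(|\grad^k u|^2 + \epsilon\bigr)^{s/(2k)},
\]
with positive coefficients $A_k$ and exponents $a_k$ to be chosen, and $\epsilon>0$ a smoothing parameter (preventing non-smoothness at critical points of $|\grad^k u|^2$) which is sent to zero at the end.

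The core computation is $(\ddt + \lap^2) F$. For each summand one expands $\lap^2(|\grad^k u|^2 + \epsilon)^{s/(2k)}$ by the product/chain rule and uses $\ddt u = -\lap^2 u$ to cancel the top-order evolution of $u$ itself. This yields a positive principal term involving $|\grad^{k+1} u|^2$ and $|\grad^{k+2} u|^2$, plus error terms polynomial in $\grad^j u$ for $j \le k+2$ and in $\grad^i \eta$ for $i \le 4$. Derivatives of $u$ of order at most $s$ are controlled globally by the hypothesis on $f$; the top-order derivatives $|\grad^{s+1} u|$ and $|\grad^{s+2} u|$ (uncontrolled a priori) appear only in the positive principal part or in cross products that can be absorbed into it by Young's inequality, provided the $A_k$ and $a_k$ are tuned so the homogeneity across summands lines up. The parabolic maximum principle is then applied to $F$ on $\overline{B_1(0)} \times [0,\tau]$ for $\tau \le 1/N$: $F \equiv 0$ on $\boundary B_1(0)$ by construction of $\eta$, $F(\cdot,0)$ is controlled by the assumed bound on $\sum_{i=0}^{2n+s+1}|\grad^i u_0|^2$, and the evolution inequality propagates this to $F \le N$ throughout. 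Unwinding $F$ and letting $\epsilon \downto 0$ yields $f(x,t) \le N(1-|x|)^{-s}$ in the region where $(1-|x|)^4 \ge Nt$ and $t \le 1/N$, which is the conclusion.

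The hard part will be the algebraic bookkeeping of this evolution inequality: the non-integer power $s/(2k)$ produces many terms when $\lap^2$ is expanded, the fourth-order derivatives of $\eta^{a_k}$ produce many cutoff-error terms, and the uncontrolled intermediate derivatives $|\grad^{s+1} u|$ and $|\grad^{s+2} u|$ must be \emph{genuinely} absorbed into the positive principal part rather than merely estimated. I would expect the $a_k$ to be chosen strictly decreasing in $k$ and the $A_k$ strictly increasing in $k$, so that errors produced by the $k$-th summand get swallowed by the positive principal part of a neighbouring summand; verifying that this cascade closes with constants depending only on $n,k_0,k_1,s$ is the principal technical task, and is presumably what forces the initial-data hypothesis to carry as many as $2n+s+1$ derivatives — enough slack to initialize the scheme at $t=0$ uniformly in $B_1(0)$.
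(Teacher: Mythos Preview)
Your proposal has a fundamental gap: there is no parabolic maximum principle for fourth-order equations. For the heat operator $\partial_t - \Delta$, a subsolution inequality $(\partial_t - \Delta)F \le 0$ combined with the fact that $\Delta F \le 0$ at an interior spatial maximum yields control on $F$. For the biharmonic operator this mechanism breaks down completely: at an interior maximum of $F$ one knows nothing about the sign of $\Delta^2 F$, so an inequality of the form $(\partial_t + \Delta^2)F \le \text{(errors)}$ gives no pointwise conclusion whatsoever. Consequently the step ``the parabolic maximum principle is then applied to $F$ on $\overline{B_1(0)}\times[0,\tau]$'' simply fails, and with it the entire scheme. The absence of a maximum principle is in fact the characteristic difficulty of higher-order parabolic equations and is precisely why the paper does not argue this way.

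The paper's actual proof of Theorem~\ref{mainintro2} (stated there as Theorem~\ref{main2}) is a verbatim rerun of the proof of Theorem~\ref{mainintro1}, which is \emph{not} a Bernstein/maximum-principle argument either. It is a blowup--contradiction argument combined with \emph{integral} (energy) estimates: one assumes the scale-invariant quantity $e_s(x,t) = d^4(x)\,Q_s(x,t)$ exceeds $N$ somewhere in the parabolic region, picks the first such point, rescales so that $\tilde d^4(\tilde x_0)=N$ and $\tilde Q_s(\tilde x_0,\tilde t_0)=1$, and then shows that on a ball of radius $\sim N^{1/4}$ one has $\tilde Q_s \le 2k_0$. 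The weighted energy inequality of Corollary~\ref{CY2} then forces $\int_{B_1}|\Delta^l v|^2 \to 0$ for all $l\le p(n,s)$ as $N\to\infty$, and Lemma~\ref{L2est} together with the Sobolev--Morrey embedding converts this into $|\nabla^r v|(0,t_0)\to 0$ for each $r\le s$, contradicting $\tilde Q_s(\tilde x_0,\tilde t_0)=1$. The only modifications relative to the $s=2$ case are replacing $Q$ by $Q_s$, using $|\Delta v|^2 \le |\nabla^2 v|^2 \le Q_s(v)$ in the energy estimate, taking $p = n + \lceil s/2\rceil$, and observing that $Q_s=1$ forces $|\nabla^r v|\ge \delta(s)>0$ for some $r\in\{1,\dots,s\}$. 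The $2n+s+1$ derivatives in the initial-data hypothesis are needed not to ``initialize a cascade'' but to feed the Sobolev--Morrey embedding at the end.
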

In Section \ref{Tych} we construct an example of a solution to \eqref{bi} which
starts off indentically equal to zero, becomes immediately non-zero and is
smooth in space and time.  Solutions of this type for the heat equation are
known to exist and were constructed by Tychonoff, see \cite{Tych}.  In
particular, smooth solutions are not uniquely determined by their initial
values:  $u(\cdot,\cdot) = 0$ is also a solution.  If however we consider 
smooth solutions which satisfy \eqref{introA1} then the solution is uniquely
determined by it's initial value, as we show in Section \ref{uniqueness}.  The
theorem we prove is:
\begin{theorem}\label{uniquenessintro}
Let $u,v:\R^n \times [0,T] \to \R$, $T < \infty$ be smooth solutions to \eqref{bi}
which satisfy
\begin{align*} 
 |\lap v|^2(x,t) + |\lap u |^2(x,t) \leq \frac{k_0}{ t}
\end{align*} 
for some $k_0 \in \R$,
for all $t \in [0,T]$ and all $x \in \R^n$ and
\begin{align*} 
u_0(\cdot) = v_0(\cdot).
\end{align*}
Then  $u \equiv v$.
\end{theorem}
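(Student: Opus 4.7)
The plan is to apply Theorem~\ref{mainintro1} to the difference $w := u-v$ together with a parabolic rescaling. The crucial observation is that for $w$ the initial-data constant in Theorem~\ref{mainintro1} can be taken to vanish on every ball, so that once a universal $N$ is obtained the bound on $|\lap w|^2$ at a fixed point shrinks as the spatial window is enlarged.

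I would first set $w := u-v$. By linearity of \eqref{bi}, $w$ solves $\partt w = -\lap^2 w$ on $\R^n \times [0,T]$, with $w_0 \equiv 0$ and
\[
|\lap w|^2 \le 2|\lap u|^2 + 2|\lap v|^2 \le \frac{4k_0}{t}.
\]
For any $R > 0$ and any $x_0 \in \R^n$ I would form the parabolic rescaling
\[
w_R(x,t) := w(Rx + x_0,\, R^4 t), \qquad (x,t) \in \R^n \times [0, T/R^4].
\]
A short calculation shows that $w_R$ again solves \eqref{bi}, that $(w_R)_0 \equiv 0$, and that $|\lap w_R|^2(x,t) = R^4 |\lap w|^2(Rx+x_0, R^4 t) \le 4k_0/t$. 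Hence Theorem~\ref{mainintro1} applies to $w_R$ with parameters $4k_0$ and $k_1 = 0$, producing a constant $N = N(n, k_0)$ independent of $R$ and $x_0$.

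Evaluating the conclusion of Theorem~\ref{mainintro1} at $x = 0$ gives $|\lap w_R|^2(0, t) \le N$ for all $t$ with $t \le 1/N$ and $t \le T/R^4$. Undoing the rescaling with $\tau := R^4 t$ yields
\[
|\lap w|^2(x_0, \tau) \le \frac{N}{R^4}
\]
whenever $\tau \le R^4/N$ and $\tau \le T$. For fixed $(x_0, \tau) \in \R^n \times (0, T]$ the first condition is satisfied for all sufficiently large $R$, so sending $R \to \infty$ forces $\lap w(x_0, \tau) = 0$. Therefore $\lap w \equiv 0$ on $\R^n \times (0,T]$, and the equation itself then gives $\partt w = -\lap(\lap w) = 0$, so $w(\cdot,t) \equiv w_0 \equiv 0$ and $u \equiv v$.

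I do not anticipate a substantive technical obstacle once Theorem~\ref{mainintro1} is in hand; the argument is essentially a compactness-in-scale, made possible by the vanishing of $k_1$ for the difference. The only care required is to verify that \eqref{bi}, the pointwise bound $|\lap\cdot|^2 \le C/t$, and the vanishing of all spatial derivatives of the initial data are invariant under the parabolic rescaling $(x,t) \mapsto (Rx+x_0, R^4 t)$, which is immediate from the defining formulas for $w_R$.
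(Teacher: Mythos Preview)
Your argument is correct and is genuinely different from the paper's route. The paper also reduces to the difference $w=u-v$ with $w_0\equiv0$ and then invokes Theorem~\ref{mainintro1} (with $k_1=0$) via translation to obtain the uniform bound $|\lap w|^2\le N$ on $\R^n\times[0,1/N]$. But from there the paper does \emph{not} rescale; instead it re-runs the weighted energy machinery of Section~\ref{STaprioriestimates} (Corollary~\ref{CY2}) with a cutoff of radius $\rho$, lets $\rho\to\infty$ to kill $\int_{\R^n}|\lap^p w|^2$ for $p=n+1$ and $p=n$, and then bootstraps down through the chain $\int_{B_1}|\lap^{p-2}w|^2,\int_{B_1}|\lap^{p-3}w|^2,\dots$ using the evolution equation, finally concluding $w=0$ on $[0,1/N]$ and iterating in time.

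Your scaling trick replaces all of this second pass through the energy estimates by a single limit: because $k_1=0$ and the growth bound $|\lap\cdot|^2\le C/t$ is scale-invariant, the constant $N$ from Theorem~\ref{mainintro1} is independent of $R$, so the unscaled conclusion $|\lap w|^2(x_0,\tau)\le N/R^4$ collapses to $\lap w\equiv0$ as $R\to\infty$, and the equation finishes the job. This is shorter and more transparent; it exploits the parabolic scaling symmetry that is already central to the proof of Theorem~\ref{mainintro1} itself. The paper's approach, on the other hand, is closer in spirit to how one might argue in situations where exact scale invariance is unavailable but localized energy decay still holds.
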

The uniqueness problem for the classical heat flow has a rich history.
In the setting where one assumes the solution is non-negative, D. Widder
established uniqueness  for the heat flow on $\R$ for solutions whose
initial value is zero, see Theorem 5 of \cite{W}.
His method relied upon a specific integral representation of the
solution, Theorem 4 of \cite{W}, which
is valid for non-negative solutions.

This proved to be readily generalisable and so influential as to be given its
own name: a uniquness theorem is of Widder-type if the only hypotheses are on
the geometry of the ambient space and that the solution be non-negative.
For example, Aronson \cite{Ar} proved that non-negative solutions to second
order linear equations of divergence form (the coefficents of the
operator being sufficiently regular) in $\R^n$ are uniquely determined by
their initial data: see Section 5 of that paper. 

Here we take an approach reminiscent of \cite{Si1} that
complements the existing literature. Our assumptions for Theorem
\ref{uniquenessintro} are  global but we do not require any non-negativity of
the solution.
Although the flow \eqref{bi} is higher-order, we are able to obtain our
estimates using pointwise assumptions, as opposed to integral conditions.

The paper is organised as follows.
Section \ref{STblowup} contains the proof of Theorem \ref{mainintro1} and
Theorem \ref{mainintro2}.
These proofs require some energy estimates for solutions to \eqref{bi}, which
is the subject of Section \ref{STaprioriestimates}.
Section \ref{STuniqueness} contains the proof of Theorem \ref{uniquenessintro},
and Section \ref{Tych} contains full details on the Tychonoff-type solutions
discussed above.
We present in Section \ref{STexample} details on the construction of the example
mentioned above which shows that control of the form \eqref{EQgc} is
natural.
We also show that the solution in this example has a speed which is not integrable.

Some of the estimates from Section \ref{STblowup} and Section
\ref{STaprioriestimates} rely on interpolation inequalities which are not
readily available in the current literature.  These interpolation inequalities
are proved in the Appendix.

\section{A Blowup argument}
\label{STblowup}

Let $u: \R^n \times  [0,T] \to \R$ be a
smooth solution to
 \eqref{bi}.
We consider the scale invariant quantity 
$e(u): B_1(0) \times [0,T] \to \R$ defined by
\begin{align*} 
e(u)(x,t):= d^4(x) |\lap u|^2(x,t),
\end{align*}
where $d(x):= (1- |x|)$ is the distance from the boundary
$\boundary B_1(0)= \{ x \in \R^n \,|\, |x| = 1 \}$ to the point $x$ in
$B_1(0)$. 
Note that $e = 0$ on $\partial B_1(0)$.
The function $e$ is scale invariant in the following sense: 
If we define $\ti u( \ti x, \ti t):= u(c \ti x, c^4
\ti t)-c_0,$ where $c_0$ is an arbitrary constant in $\R$,
 then $\ti u:  \R^n \times  [0,\ti T] \to \R$ is still a
smooth solution to \eqref{bi} and the quantity $\ti e( \ti x, \ti t)
:= \ti e(\ti u)(\ti x,\ti t) $
which is defined by
\begin{align*} 
\ti e(\ti u)(\ti x,\ti t):= {\ti d}^4(\ti x)
|\lap \ti u|^2(\ti x,\ti t) ,
\end{align*}
satisfies
\begin{align*} 
\ti e(\ti x,\ti t) :=  e(x,t),
 \end{align*}
where here
$ x:= c \ti x$, $ t:=  c^4
\ti t$ , $\ti T = \frac{ T}{c^4} $, and ${\ti d}(\ti x) := ( \frac 1 c -
| \ti x|)$  is the distance from $x \in B_{1/c}(0)$ to $ \partial B_{1/c}(0)$.
The scale invariance of $e$ can be seen as follows: 
\begin{align*}
 (\grad \ti u) (\ti x, \ti t) &= c (\grad  u) (x, t)\,,\ &&\mbox{and hence} \ 
 (\grad^k \ti u ) (\ti x, \ti t)  = c^k ( \grad^k  u ) (x, t)\,, 
\\
 \left(\frac{\partial{}}{{\partial  t}} \ti u \right)(\ti x, \ti t) &= c^4 \Big(\partt  u\Big) ( x,   t)\,,
\ &&\hskip-1cm\mbox{and hence}\ \left(\Big(\frac {\partial } {\partial  t}\Big)^k
\ti u\right) (\ti x, \ti t) = c^{4k} \left( \Big(\partt   \Big)^k u\right) ( x,
 t)\,,\\
 \ti d( \ti x) &=  \Big( \frac 1 c - | \ti x|\Big)
&&\\&=  \frac 1 c (  1 - | c \ti
x|)&&\\ &=  \frac 1 c (  1 - |x|) = \frac 1 c d(x) \,,
 &&\mbox{and hence}\ 
 {\ti d}^4( \ti x) =  \frac 1 {c^4} d^4(x)\,,
\end{align*}
where here $(\partt )^k$ refers to $k$ time derivatives, and
$\grad^ku$ refers to $k$ spatial derivatives, and we are assuming that
$k \in \N$ ($k \neq0$).
Therefore
\begin{align*}
|\lap  \ti u|^2(\ti x, \ti t){\ti d}^4(\ti x)  = |c^2 \lap  u|^2 (x,t)
  \frac 1 {c^4} d^4(x)  = |\lap  u|^2 (x,t) d^4(x).
 \end{align*}
and $e(x,t) =
\ti e(\ti x, \ti t)$ as claimed.
Note that 
\begin{align*}
x \in B_1(0)\,, \ d^4(x) \geq Nt\quad \Longleftrightarrow \quad
\ti x \in B_{1/c}(0)\,, \ {\ti d}^4(\ti x) \geq N \ti t
\end{align*}
in view of the definitions of the terms involved.

In the following, we will assume that 
\begin{equation}
\tag{\rm A1} b(u)(x,t) :=  t  |\lap u|^2(x,t)   \leq k_0 < \infty
 \ \mbox{ for all } x \in \R^n\,, t \in [0,T],  \label{assum1}
\end{equation}
for some fixed $k_0 \in \R^+$.
That is, the quantity \begin{equation}
 Q(x,t):= Q(u)(x,t) := |\lap u|^2(x,t) \label{Qdefn}
\end{equation}
 may approach infinity
as $t \searrow 0$, but it is only allowed to do so at a controlled,
but non-integrable rate.
Note that we have $e(x,t) = d^4(x) Q(x,t) $ and $b(x,t) = t Q(x,t)$.
The function $b(x,t)$ is also scale invariant in the following sense:
If we define $\ti u$, $\ti x$ and $\ti t $ as above, then 
$\ti b(\ti x,\ti t) := b(\ti u)(\ti x, \ti t) = b(x,t)$
and hence 
 $\ti b(\ti x,\ti t)\leq k_0 < \infty$ for all $\ti x \in
\R^n$ and all $\ti t \in [0,\ti T]$.
The scale invariance of $b$ may be verified with an  argument
similar to the one
we used above to show that $e$ is scale invariant.
We are interested in the local behaviour of solutions $u$ to \eqref{bi} which satisfy \eqref{assum1}.
In particular, if at time zero $u_0 = u(\cdot,0)$ satisfies
\begin{equation}
\tag{A2} \sup_{B_1(0)} \sum_{i=0}^{2n+2} |\grad^i u_0|^2 \leq k_1< \infty,
\label{assum2}
\end{equation}
for some fixed $k_1 \in \R^+$, then we show that the solution satisfies estimates on a smaller ball
for a short well-defined time interval.
The following theorem is Theorem \ref{mainintro1} of the introduction. 
\begin{theorem}\label{main1}
Let $u:\R^n \times [0,T] \to \R$, $T < \infty$  be a smooth solution to \eqref{bi}
which satisfies assumptions \eqref{assum1} and \eqref{assum2}.
Then there exists an $N = N(n,k_0,k_1) >0$ such that
\begin{align*}
e(x,t) \leq N
\end{align*}
for all $x$, $t$ which satisfy $x \in \overline B_1(0)$, $\,d^4(x)
\geq Nt$, and $\,t \leq  \frac1N$, $t \leq T$.
\end{theorem}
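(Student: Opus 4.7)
The plan is to argue by contradiction combined with a parabolic blow-up. Assume no such $N$ exists; then for a sequence $N_j\to\infty$ there are smooth solutions $u_j:\R^n\times[0,T_j]\to\R$ satisfying \eqref{assum1}--\eqref{assum2} with the fixed constants $k_0,k_1$ together with points in the admissible region where $e(u_j)>N_j$. For a cleaner compactness setup I would work in the enlarged region $\Omega_j := \{x\in\overline{B_1(0)}\colon d^4(x)\ge t,\ 0\le t\le\min(T_j,1)\}$, which still contains the counterexample point. On the parabolic boundary of $\Omega_j$ the scale-invariant $e$ is controlled purely by $k_0$ and $k_1$: $e\equiv 0$ on $\{d=0\}$, $e\le k_1$ on $\{t=0\}$ by \eqref{assum2}, and $e\le k_0$ on the slanted face $\{d^4=t\}$ and on the top $\{t=\min(T_j,1)\}$ by \eqref{assum1} (since $e = d^4|\lap u|^2\le d^4\cdot k_0/t$). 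For $j$ large the supremum $\sigma_j := \sup_{\Omega_j}e(u_j)\ge N_j$ is therefore attained at an interior point $(\bar x_j,\bar t_j)$, with $d(\bar x_j)>0$ and $\bar t_j>0$.

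Next, I would rescale around $(\bar x_j,0)$ at length scale $c_j^4 := |\lap u_j|^{-2}(\bar x_j,\bar t_j) = d^4(\bar x_j)/\sigma_j$, so that $c_j\to 0$ and $d(\bar x_j)/c_j = \sigma_j^{1/4}\to\infty$, and set $\tilde u_j(\tilde x,\tilde t) := u_j(\bar x_j + c_j\tilde x,c_j^4\tilde t) - u_j(\bar x_j,0)$. This is again a smooth solution of \eqref{bi}. By construction $|\lap\tilde u_j|^2(0,\tilde t_j^*)=1$ at $\tilde t_j^* := \bar t_j/c_j^4 = b(u_j)(\bar x_j,\bar t_j)\in(0,k_0]$; the scale invariance of $b$ preserves \eqref{assum1}; and the maximality of $(\bar x_j,\bar t_j)$ combined with the trivial estimate $d(\bar x_j+c_j\tilde x)\ge d(\bar x_j)/2$ on $|\tilde x|\le\sigma_j^{1/4}/2$ yields the key local uniform bound $|\lap\tilde u_j|^2\le 16$ on an exhausting sequence of parabolic cylinders. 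Meanwhile the rescaled initial data satisfies $|\grad^l\tilde u_j(\tilde x,0)|\le C(n,k_1)c_j^l$ locally uniformly for $l\le 2n+2$ by the chain rule and \eqref{assum2}, hence tends to zero in $C^{2n+2}_{\loc}$.

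The interior derivative estimates of Section \ref{STaprioriestimates} promote the uniform bound on $|\lap\tilde u_j|^2$ into $C^k$ bounds on compact subsets, and after diagonal extraction a subsequence converges smoothly on compacts of $\R^n\times(0,\tilde T_\infty]$ and in $C^{n+1}$ up to $\tilde t=0$ (via Sobolev embedding applied to the $2n+2$ initial derivatives) to a smooth solution $\tilde u_\infty$ of \eqref{bi} on $\R^n\times[0,\tilde T_\infty]$ with $|\lap\tilde u_\infty|^2\le 16$, $\tilde u_\infty(\cdot,0)\equiv 0$ (so that $\lap\tilde u_\infty(\cdot,0)\equiv 0$), and $|\lap\tilde u_\infty|^2(0,\tilde t_\infty^*)=1$ for some $\tilde t_\infty^*\in(0,k_0]$. (The case $\tilde t_j^*\to 0$ yields an immediate contradiction, because smooth convergence up to $\tilde t=0$ would force $|\lap\tilde u_\infty|^2(0,0) = 1$ against $\lap\tilde u_\infty(\cdot,0)\equiv 0$.)

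To close the argument, note that $v:=\lap\tilde u_\infty$ is itself a smooth solution of \eqref{bi}, since $\lap$ commutes with $-\lap^2$, globally bounded by $4$ on $\R^n\times[0,\tilde T_\infty]$ with $v(\cdot,0)\equiv 0$. Standard uniqueness for bounded solutions of $\partial_t v+\lap^2 v=0$ with vanishing initial data---justifiable by taking the Fourier transform in tempered distributions, giving $\widehat v(\xi,t) = e^{-t|\xi|^4}\widehat v(\xi,0)\equiv 0$---forces $v\equiv 0$, contradicting $v(0,\tilde t_\infty^*)^2=1$. I expect the most delicate ingredients to be: (i) the local higher-derivative a priori estimates of Section \ref{STaprioriestimates}, which must be truly interior and uniform enough to allow smooth blow-up convergence on all of $\R^n$; (ii) propagating the vanishing of enough rescaled initial derivatives through the compactness argument to give control of $\tilde u_\infty$ down to $\tilde t=0$, which is the structural reason for the exponent $2n+2$ in \eqref{assum2}; and (iii) justifying the bounded-solution uniqueness used in the last step in a form compatible with the limit's regularity---this is the main conceptual hurdle, and also explains why the full uniqueness statement of Theorem \ref{uniquenessintro} is deferred to a later section.
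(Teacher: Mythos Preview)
Your blow-up strategy is sound and can be made to work, but it is organized differently from the paper's argument. Both start the same way: contradiction, point-picking, and parabolic rescaling so that $\tilde Q=1$ at the marked point while $\tilde Q$ is uniformly bounded on a large spatial ball for all relevant times (the paper does this via a ``first time $e=N$'' selection and a case split on whether $\tilde t_0\le \tfrac12$, rather than your supremum over $\Omega_j$, but the outcome is the same key estimate \eqref{EQkeyestimate}). The divergence is in how the contradiction is closed.

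The paper never extracts a limit. It runs the energy inequality of Corollary~\ref{CY2} directly on the single rescaled solution $v$ with $p=n+1$: the uniform bound on $|\Delta v|^2$ controls the right-hand side, and the rescaled initial data bound \eqref{newA2} controls $E^p_\eta(v)(0)$, giving $E^p_\eta(v)(t)\le C\rho^{n-4p}\to 0$ as $N\to\infty$. Iterating down in $l$ (via $\partial_t\int_{B_1}|\Delta^{l-2}v|^2\le \int_{B_1}|\Delta^{l}v|^2+\int_{B_1}|\Delta^{l-2}v|^2$) and then invoking Lemma~\ref{L2est} plus Sobolev--Morrey yields $|\Delta v|^2(0,\tilde t_0)\to 0$ quantitatively, contradicting $=1$. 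No compactness, no Liouville step.

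Your route---pass to a smooth limit $\tilde u_\infty$ and invoke uniqueness for \emph{bounded} solutions via the Fourier transform---is legitimate and not circular (bounded-solution uniqueness is strictly easier than Theorem~\ref{uthm}). But it obliges you to supply genuine $C^k_{\loc}$ compactness, which the estimates of Section~\ref{STaprioriestimates} give only after precisely the Sobolev--Morrey bootstrapping the paper already carries out; you are not saving work there. You also have to treat the degeneration $\tilde t_j^*\to 0$ by proving equicontinuity of $\Delta\tilde u_j$ in time down to $\tilde t=0$; this is where the $2n+2$ initial derivatives are spent, but it needs an actual argument, not a parenthetical. Finally, your control on the top face $\{t=\min(T_j,1)\}$ of $\Omega_j$ only gives $e\le k_0$ when $T_j\ge 1$; for short $T_j$ you should instead adopt the paper's ``first time $e=N$'' device, which makes that face irrelevant. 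None of these points is fatal, but the paper's direct argument is shorter and fully self-contained.
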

For our theorem on higher order regularity, we modify the quantities
above.
Let $s \in \N$, $s \geq 2$ be given and fixed, and define
\begin{align*}
& Q_s(u)(x,t) = ( |\grad^s u| + |\grad^{s-1} u|^{s/(s-1)} +
 \ldots + | \grad u |^{s} )^{4/s}(x,t)  \\
& e_s(u)(x,t) = d^4(x)  Q_s(u)(x,t) \\
& b_s(u)(x,t)  = t  Q_s(u)(x,t) .
\end{align*}
These quantities are scale invariant in the sense explained above:
for $\ti u$, $\ti T$, $\ti t$, $\ti x$ and $\ti d$   defined as above, and
$\ti Q_s(\ti x , \ti t) := Q_s(\ti u)(\ti x, \ti t)$ we have
\begin{align*}
 \ti e_s(\ti x,\ti t) &:= {\ti d}^4(\ti x)  \ti Q_s (\ti x,\ti t) =  d^4(x)  Q_s(u)(x,t)\,,\quad\text{and}\\
 \ti b_s(\ti x, \ti t)  &:= \ti t \ti Q_s (\ti x,\ti t)  =  t  Q_s (x, t) . 
\end{align*}
For this set-up we require
\begin{equation}
\tag{${\rm A}_{\rm s}1$}
\label{assumt1}
b_s(u)(x,t)  \leq k_0 < \infty 
\end{equation}
for all $x \in \R^n$ $t\in [0,T]$, and for some fixed $k_0\in\R^+$, $s \geq 2$,
$s \in \N$; and
\begin{align}
\tag{${\rm A}_{\rm s}2$}
\label{assumt2}
\sup_{B_1(0)} \sum_{i=0}^{2n+s+1} |\grad^i u_0|^2 \leq k_1< \infty,
\end{align}
for some fixed $k_1 \in \R^+$.
In this context we obtain the following variant of Theorem \ref{main1} above.
\begin{theorem}\label{main2}
Let $u:\R^n \times [0,T] \to \R$, $T < \infty$  be a smooth solution to \eqref{bi}
which satisfies assumptions ($A_s$1) for some $s \in N, s\geq 2$  and  ($A_s$2).
Then there exists an $N = N(n,k_0,k_1,s) >0$ such that
\begin{align}
e_s(x,t) \leq N
\,,\label{conclusion2}
\end{align}
for all $x$, $t$ which satisfy $x \in \overline B_1(0)$, $d^4(x) \geq Nt$, $t
\leq \frac1N$, $t \leq T$. \end{theorem}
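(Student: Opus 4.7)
The plan is to mirror the blowup/rescaling scheme used for Theorem~\ref{main1}, replacing the scale-invariant quantities $e$, $Q$, $b$ by their higher-order counterparts $e_s$, $Q_s$, $b_s$, and to close the argument by applying the uniqueness statement of Theorem~\ref{uniquenessintro} to the limit. I argue by contradiction. If the conclusion fails, then for a sequence $N_k\to\infty$ one can find smooth solutions $u_k$ of \eqref{bi} satisfying $(A_s1)$ and $(A_s2)$ with common constants $(k_0,k_1)$, together with points $(x_k,t_k)$ in the admissible region
\[
\mathcal{A}_{N_k} \,:=\, \{(x,t) : x \in \overline B_1(0),\ d^4(x)\geq N_k t,\ t\leq 1/N_k,\ t\leq T_k\},
\]
such that $e_s(u_k)(x_k,t_k) \geq \tfrac{1}{2} M_k$, where $M_k := \sup_{\mathcal{A}_{N_k}} e_s(u_k) \geq N_k$.

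Next I rescale. Put $c_k := Q_s(u_k)(x_k,t_k)^{-1/4}$ and define
\[
\tilde u_k(\tilde x,\tilde t) \,:=\, u_k(c_k\tilde x,\,c_k^4\tilde t) - u_k(0,0), \qquad (\tilde x_k,\tilde t_k) := (x_k/c_k,\ t_k/c_k^4).
\]
The scale invariance of $Q_s$, $e_s$ and $b_s$ gives $\tilde Q_s(\tilde u_k)(\tilde x_k,\tilde t_k) = 1$, $\tilde e_s(\tilde x_k,\tilde t_k) = e_s(x_k,t_k) \geq M_k/2 \to \infty$, and $\tilde b_s(\tilde u_k) \leq k_0$ on $\R^n\times[0,T_k/c_k^4]$. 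In particular $\tilde d(\tilde x_k) \to \infty$, so the rescaled balls $B_{1/c_k}(0)$ exhaust $\R^n$ and $\tilde x_k$ moves to the interior. The near-maximality of $(x_k,t_k)$ converts, via the identity $\tilde e_s=\tilde d^4\tilde Q_s$, into a pointwise bound $\tilde Q_s(\tilde u_k) \leq C(n)$ on any fixed parabolic cylinder around $(\tilde x_k,\tilde t_k)$ for all large $k$. Since the $j=s-2$ summand of $Q_s^{s/4}$ is $|\grad^2 u|^{s/2}$, one has $\tilde Q_s \geq |\grad^2\tilde u_k|^2 \geq n^{-1}|\lap \tilde u_k|^2$; combined with $\tilde b_s \leq k_0$ this yields in particular $\tilde t\,|\lap\tilde u_k|^2 \leq C k_0$ globally, so Theorem~\ref{uniquenessintro} will be applicable to the limit.

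Feeding these bounds into the energy estimates of Section~\ref{STaprioriestimates} and the interpolation inequalities of the Appendix and iterating in the derivative order produces uniform local bounds on $\grad^j\tilde u_k$ for every $j$; converting spatial into time derivatives via \eqref{bi}, an Arzela-Ascoli argument extracts a smooth subsequential limit $\tilde u_\infty$ on $\R^n\times[0,\tilde t_\infty]$, with $\tilde t_\infty := \lim\tilde t_k \in (0,k_0]$, solving \eqref{bi} and satisfying $\tilde Q_s(\tilde u_\infty)(\tilde x_\infty,\tilde t_\infty) = 1$ and $\tilde t\,|\lap\tilde u_\infty|^2 \leq Ck_0$. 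Now the initial data of the limit is completely flat: because $\grad^i\tilde u_k(\tilde x,0) = c_k^i(\grad^i u_k)(c_k\tilde x,0)$ for $0\leq i\leq 2n+s+1$ and $c_k\to 0$, assumption $(A_s 2)$ forces $\tilde u_k(\cdot,0) - \tilde u_k(0,0) \to 0$ in $C^{2n+s+1}_{\mathrm{loc}}(\R^n)$, so $\tilde u_\infty(\cdot,0) \equiv 0$. Applying Theorem~\ref{uniquenessintro} to the pair $\tilde u_\infty$ and $v\equiv 0$ then gives $\tilde u_\infty \equiv 0$, contradicting $\tilde Q_s(\tilde u_\infty)(\tilde x_\infty,\tilde t_\infty) = 1$.

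The principal obstacle is the smooth-compactness step: bootstrapping from the scale-invariant bounds $\tilde Q_s \leq C$ (local) and $\tilde b_s \leq k_0$ (global) to genuine $C^\infty_{\mathrm{loc}}$ control on $\tilde u_k$, so that a classical subsequential limit can be extracted. This is where the energy identities of Section~\ref{STaprioriestimates} are essential, supplemented by the interpolation inequalities in the Appendix that allow one to absorb lower-order terms in the derivative chain. A secondary but delicate point is excluding the degenerate possibility $\tilde t_\infty = 0$; this is expected to be handled, as in the proof of Theorem~\ref{main1}, by combining the near-maximality of the selection with the flatness of the rescaled initial data forced by $(A_s 2)$, which together preclude any concentration of $\tilde Q_s$ at rescaled time zero.
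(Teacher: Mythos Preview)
Your strategy---rescale, extract a smooth subsequential limit, then kill it with Theorem~\ref{uniquenessintro}---differs markedly from the paper's. The paper's proof of Theorem~\ref{main2} is a verbatim repetition of the proof of Theorem~\ref{main1}: replace $Q,e,b$ by $Q_s,e_s,b_s$, pick the \emph{first} time $t_0$ at which $e_s$ reaches $N$ (not a near-maximal point of a sup), rescale so that $\tilde d^4(\tilde x_0)=N$ and recentre at $\tilde x_0$, and feed the bound $|\Delta v|^2\le|\nabla^2 v|^2\le Q_s(v)\le 2k_0$ into Corollary~\ref{CY2} with $p=n+\lceil s/2\rceil$. The resulting smallness $\int_{B_{1/2}}|\nabla^l v|^2\le\hat\varepsilon(N)\to0$ for all $l\le 2n+s$, combined with Sobolev--Morrey, forces $|\nabla^r v|(0,\tilde t_0)\to 0$ for every $r\le s$, directly contradicting $\tilde Q_s(0,\tilde t_0)=1$. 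No compactness, no limit, no appeal to uniqueness.

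The two ``obstacles'' you flag are not side issues but are exactly where the work lies, and your proposal does not resolve them. The compactness step: turning $\tilde Q_s\le C$ plus flat rescaled data into uniform $C^\infty_{\mathrm{loc}}$ control already requires the full energy/Sobolev bootstrap of Section~\ref{STaprioriestimates} and Lemma~\ref{L2est}, and once that chain is run you already have the pointwise contradiction without ever passing to a limit; moreover a genuine $C^\infty$ limit needs bounds on \emph{all} derivatives, whereas $(A_s2)$ controls only the first $2n+s+1$ at $t=0$, so you would also need parabolic smoothing estimates not supplied in the paper. The case $\tilde t_k\to 0$ cannot simply be ``expected to be handled'': nothing in a near-maximal point selection rules it out, and it is precisely the paper's Case~1, dealt with quantitatively via the first-hitting-time choice of $(x_0,t_0)$. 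There is also a centering slip: you subtract $u_k(0,0)$ and then track $\tilde x_k=x_k/c_k$, which can escape to infinity as $c_k\to0$, so no limit point $\tilde x_\infty$ need exist; the paper instead recentres at $\tilde x_0$ and subtracts $u(c\tilde x_0,0)$, so that the contradiction always occurs at the origin of a fixed ball.
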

Note that this theorem is equivalent to
Theorem \ref{mainintro2} of the introduction.
Under the same  assumptions as Theorem \ref{main2},  but with the condition that $s \geq 4$ , we also obtain  a local supremum
bound for $|u|$:
\begin{corollary}\label{main3}
Assume everything is as in Theorem \ref{main2} but that $s \geq 4$. Then we also have
\begin{align}
|u(x,t)| \leq \sqrt{ k_1 } + 1 \label{conclusion3}
\end{align}
for all $x$, $t$ which satisfy $x \in \overline B_1(0)$, $d^4(x) \geq Nt$, $t
\leq \frac1N$, where $N$ is as in the conclusion of Theorem \ref{main2}
above.
\end{corollary}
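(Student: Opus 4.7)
The plan is to control $|u(x,t) - u_0(x)|$ by integrating the equation \eqref{bi} in time, using the spatial derivative bound already furnished by Theorem \ref{main2}. Observe first that hypothesis \eqref{assumt2} immediately gives $|u_0(x)| \leq \sqrt{k_1}$ on $B_1(0)$, so it suffices to show $|u(x,t) - u_0(x)| \leq 1$ on the claimed region.

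Since $s \geq 4$, the term $|\grad^4 u|^{s/4}$ appears in the sum
$Q_s(u)^{s/4} = |\grad^s u| + |\grad^{s-1} u|^{s/(s-1)} + \cdots + |\grad u|^s,$
so $|\grad^4 u(x,t)| \leq Q_s(u)(x,t)$. Combining this with the conclusion $d^4(x) Q_s(u)(x,t) = e_s(x,t) \leq N$ of Theorem \ref{main2} yields
$|\grad^4 u(x,t)| \leq N/d^4(x)$
at every $(x,t)$ admissible for Theorem \ref{main2}. Because $|\lap^2 u| \leq n |\grad^4 u|$, the PDE \eqref{bi} then gives $|\partt u(x,t)| \leq nN/d^4(x)$ on the same region.

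Now fix $(x,t)$ with $x \in \overline B_1(0)$, $d^4(x) \geq Nt$, $t \leq 1/N$. For every $\tau \in [0,t]$ we still have $d^4(x) \geq N\tau$ and $\tau \leq 1/N$, so the above pointwise bound applies at $(x,\tau)$. Integrating in time at fixed $x$,
\[
|u(x,t) - u_0(x)| \leq \int_0^t |\partt u(x,\tau)|\, d\tau \leq \frac{nNt}{d^4(x)} \leq n,
\]
using $d^4(x) \geq Nt$ at the last step. The only remaining technicality is absorbing the dimensional factor $n$: by enlarging $N$ in Theorem \ref{main2} (which is always permissible, since a larger $N$ only shrinks the admissible region), the constraint $d^4(x) \geq Nt$ can be made to force $nNt/d^4(x) \leq 1$. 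Combined with $|u_0(x)| \leq \sqrt{k_1}$ this yields \eqref{conclusion3}.

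There is no substantive obstacle here; the argument is really just the fundamental theorem of calculus applied along the time slice $\{x\}\times [0,t]$. The one point requiring minor care is the dimensional bookkeeping described above, which amounts to choosing the constant $N$ in Theorem \ref{main2} slightly larger than strictly necessary for that theorem.
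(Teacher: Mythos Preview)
Your argument is correct and follows the same route as the paper: bound $|u_0(x)|$ by $\sqrt{k_1}$ from \eqref{assumt2}, extract a pointwise bound on $|\lap^2 u|$ from the conclusion of Theorem \ref{main2}, and integrate $\partial_t u = -\lap^2 u$ in time using that the admissible region is downward-closed in $t$. The paper phrases the extraction step as ``taking $s=4$ in Theorem \ref{main2}'' and writes $|\lap^2 u|\le N/d^4$ directly, silently absorbing the dimensional factor you track explicitly via $|\lap^2 u|\le n|\nabla^4 u|$; your remark about enlarging $N$ to swallow that factor is the honest version of what the paper leaves implicit.
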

\begin{proof}[Proof of Corollary \ref{main3}.]
Let $(x,t_0)$ be a point which satisfies  $d^4(x) \geq Nt_0$ and $t_0  \leq
\frac 1 N$.
Then  $d^4(x) \geq Nt $ and $ t \leq \frac 1 N$ for all $t \leq t_0$.
Hence, taking $s=4$ in Theorem \ref{main2} we see that $|\lap^2 u |(x,t)| \leq
\frac{N}{d^4(x)}$ for all $t \leq t_0$ and that $|u(x,0)| \leq \sqrt{k_1}$ in view
of \eqref{assumt2}.
The evolution equation for $u(x,t)$ is
$\partt u(x,t) = -\lap^2 u (x,t)$. Integrating this from $0$ to $t_0$
and using the two estimates which we just derived, we see that
$ |u(x,t_0)| \leq t_0 \frac{N}{d^4(x)} +\sqrt{ k_1 }$. Using that $d^4(x) \geq
N t_0$
we obtain the result.
\end{proof}
 
Now we prove Theorem \ref{main1}.

\begin{proof}[Proof of Theorem \ref{main1}.]
Define $d$ resp.  $e$ to
be zero on $\R^n \backslash B_1(0)$ resp. $(\R^n \backslash B_1(0))
\times  [0,T]$. Then $e$ is continuous on $\R^n \times  [0,T]$.
Assume that the conclusion of the theorem is false, and let $N \in
\N$.
 Note that $e(x,0) \leq k_0$ where $k_0$ is the constant appearing in
 \eqref{assum2}. Without loss of generality $N >k_0$.
The set of $x \in \overline{B_1(0)} $,  $ t\in [0,T]$ for which
$1 \geq d^4(x) \geq Nt$ and  $t
\leq  \frac 1 N$ is a compact set in $\R^n \times  [0,T]$  which we
denote by $K$.
By compactness of $K$ and continuity of $e$ and the fact that $e(x,0)
\leq k_0 < N$ for all $x \in \overline{B_1(0)}$ , there must be a first time $t_0 \in (0,\frac 1 N]$ and (at least) one point
$x_0 \in B_1(0)$ such that $e(x_0,t_0) = N$. That is:
$e(x,t) < N$ for all $(x,t) \in K$ with  $ t < t_0$, and 
$e(x_0,t_0) = N$ for some point $(x_0,t_0) \in K$. Clearly we have
$d(x_0) >0$ for such a point, that is, $x_0 \in B_1(0)$, since $e(x_0,t_0)
> 0$.
Rescale the solution $u$ to $\ti u( \ti x, \ti t):= 
u(c \ti x, c^4 \ti t) -c_0$, where $c_0:= u(c \ti x_0,0)$, and  $c>0$
is chosen so that   ${\ti d}^4(\ti
x_0) = N$.
It  is possible to choose $c$ in this way: $\ti d(\ti x) = \frac 1 c d(x)$, so we choose $c^4 =
\frac{d^4(x_0)}{N}$, which is larger than zero since $d(x_0) > 0$ as we
explained above. 
Our choice of $c_0 $ guarantees that $\ti u(\ti x_0,0) = 0$.
Note for later use that $c^4 = \frac{d^4(x_0)}{N} \leq \frac 1 N\,  (\leq
1)$, and $c \searrow 0$ as $ N \to \infty$.
Now
\begin{align*}
N  = e(x_0,t_0)  = \ti e(\ti x_0,\ti t_0) 
 = {\ti d}^4(\ti x_0) \ti Q(\ti
x_0,\ti t_0)  = N \ti Q(\ti x_0,\ti t_0)
\end{align*}
due to scaling, and hence
\begin{align*}
\ti Q(\ti x_0,\ti t_0) = 1.
\end{align*}
Similarly, 
\begin{align*}
N \geq  e(x,t) 
 = \ti e(\ti x,\ti t) 
 = {\ti d}^4(\ti x) \ti Q(\ti
x,\ti t) 
\end{align*}
for all $(x,t) \in K$ with $t \leq t_0$,  implies  
\begin{align}
\ti Q( \ti x , \ti t) \leq  \frac N {  {\ti d}^4(\ti x)  } \label{mainest}
\end{align}
 for all $\ti x \in B_{1/c}(0)$ with ${\ti d}^4(\ti x) \geq \ti t N$ and
 $\ti t \leq \ti t_0$.
Note that the inequality \eqref{mainest} is also valid for 
all $\ti x $ with ${\ti d}^4(\ti x) \geq \ti t N$ and
 $\ti t \leq \ti t_0$, since $\ti d (\ti x) = 0$ outside of $
 B_{1/c}(0)$ (here we define $\frac{M}{0} = \infty$ for $M>0$).
As in the paper \cite{Si1} we consider two cases:
{\bf Case 1}, where ${\ti d}^4(\ti x_0) \geq 2N\ti t_0$ (which  is
equivalent to $\ti t_0 \leq \frac 1 2$, since ${\ti d}^4(\ti x_0) = N$),  and
{\bf Case 2}, where ${\ti d}^4(\ti x_0) < 2N\ti t_0$ (which  is
equivalent to $1 \geq \ti t_0 > \frac 1 2$, since $\ti t_0 N \leq {\ti d}^4(\ti x_0) <2N\ti t_0$ and ${\ti d}^4(\ti x_0 ) = N$.
Note that $N \ti t_0 \leq {\ti d}^4(\ti x_0) $ since $(x_0,t_0) \in K$
).
We start with Case 1.

\vspace*{5mm}
\noindent {\bf Case 1}: In this case we have ${\ti d}^4(\ti x_0) \geq 2N\ti t_0$.
For $\ti y $ with  ${\ti d}^4(\ti y) \geq \frac N 2$, we obtain
\begin{align}
{\ti d}^4(\ti y) \geq \frac N 2 \geq N \ti t_0 \geq N \ti t
\label{(*)}
\end{align}
for all $ \ti t \leq \ti t_0$.
Hence, we see that
\begin{align}
\ti Q( \ti y, \ti t) 
\leq \frac N {  {\ti d}^4(\ti y)  }
 \leq 2  \label{mainest2}
\end{align}
for all $\ti t \leq \ti t_0$
in view of \eqref{(*)} and \eqref{mainest}.

We also have that ${\ti d}^4(\ti x_0) = N \geq \frac N 2$ and so the above
estimate also  holds for $\ti y = \ti x_0$ and $\ti t = \ti t_0$.
We calculate 
\begin{align}
 \frac N 2 
  &\leq {\ti d}^4(\ti y) = \left( \frac 1 c - |\ti y| \right)^4
\notag\\
 \Longleftrightarrow \quad  \left(\frac 1 c - |\ti y|\right)
  &\geq \frac{ N^{ \frac 1 4} }{2^{\frac 1 4} }
\notag\\
 \Longleftrightarrow \hspace{3mm}\qquad\quad\ \  |\ti y|
  &\leq -  \frac{ N^{\frac 1 4} }{ 2^{\frac 1 4} }  + \frac 1 c
\,.
\label{(**)}
\end{align}
Furthermore ${\ti d}^4  (\ti x_0) = N$ implies
$|\ti x_0| = -N^{\frac 1 4} + \frac 1 c$.
Assume that $\ti y $ is an arbitrary point with
$\ti  y \in B_{   N^{\frac 1 4}  / 400 }(\ti x_0)$.
Then we have 
\begin{align}
|\ti y| \leq | \ti x _0   |  + |  \ti x_0 - \ti y |
&= -N^{\frac 1 4} + \frac 1 c +  |  \ti x_0 - \ti y | \cr
&\leq  -N^{\frac 1 4} + \frac 1 c + \frac {N^{\frac 1 4}}{400} \cr
&\leq   \frac 1 c -  \frac {N^{\frac 1 4}}{ 2^{\frac 1 4}}  \label{didly}
\intertext{and hence, in view of \eqref{(**)}}
{\ti d}^4(\ti y) &\geq  \frac N  2\,.
\end{align}
Therefore $\ti  y \in B_{ N^{\frac 1 4}/ 400 }(\ti x_0)$ and $\ti t\leq \ti
t_0 \leq \frac 1 2$ implies in Case 1 that
\begin{align*}
 \ti Q( \ti y, \ti t)   \leq 2\,,\quad\text{and}\quad
 \ti Q( \ti x_0, \ti t_0) = 1\,,
\end{align*}
in view of \eqref{mainest2} and the definition of $\ti x_0$ and $\ti
t_0$.

\vspace*{5mm}
\noindent {\bf Case 2.}
In this case we have $1\geq \ti t_0 > \frac 1 2.$ 
For all $\ti t \leq \frac 1 2$ and $\ti y$ with ${\ti d}^4(\ti y) \geq \frac N 2$ we
have
\begin{align*}
{\ti d}^4(\ti y) \geq \frac N 2 \geq N \ti t
\end{align*}
and hence
\begin{align}
\ti Q( \ti y, \ti t) & \leq \frac N { {\ti d}^4(\ti y)} \leq 2 \label{zwisch1}
\end{align}
in view of \eqref{mainest}.
For $\ti t_0 \geq \ti t \geq \frac 1 2$ we have
\begin{align}
\ti Q( \ti y, \ti t) & \leq \frac {k_0}{ \ti t} \leq 2
k_0 \label{zwisch2}, 
\end{align}
in view of \eqref{assum1}.
Note that we may assume without loss of generality that $k_0 \geq 1$.
Now we know from \eqref{didly} that 
$y \in B_{ N^{\frac 1 4} /400}(\ti x_0)$ implies that   $\ti
  d^4(\ti y) \geq \frac N 2 $.
Hence, using the inequalities \eqref{zwisch1} and \eqref{zwisch2}, we see that 
\begin{equation}
 \ti Q( \ti y, \ti t)  \leq 2k_0\,, 
 \quad\text{and}\quad
 \ti Q( \ti x_0, \ti t_0) = 1\,,
\label{EQkeyestimate}
\end{equation}
for all $y \in B_{{N^{\frac 1 4}}/{400}}(\ti x_0)$ and $t \in  [0,\ti t_0]$.
We have shown that in both Case 1 and Case 2 we obtain the estimate \eqref{EQkeyestimate}.
Now we use Corollary \ref{CY2}  to obtain  a contradiction.

We use $v: B_R(0) \times [0,\ti t_0] \to \R$ to denote the rescaled solution $\ti u:
B_{{N^{\frac 1 4}}/{400}}(\ti x_0) \times [0,\ti t_0]
\to \R$.
That is $v(\cdot,\cdot) = \ti u(\cdot - \ti x_0,\cdot)$, $R = N^{\frac 1 4}/400$, $\ti t_0
\leq 1$. The definition of  $\ti u$ guarantees that $\ti u(\ti x_0,0)
=0$, and hence we have $v(0,0)=0$.
Also, using \eqref{assum2} and the fact that $c^4 \leq 1/N$ and $N >1$, we see that
\begin{align}
 \sup_{B_{1/c}(0)} \sum_{i=1}^{2n+2} |\grad^i v|^2(\cdot ,0) & =
 \sup_{B_1(x_0)} \sum_{i=1}^{2n+2} c^{2i} |\grad^i
 u|^2(\cdot,0) \cr
& \leq  \sup_{B_1(x_0)} \sum_{i=1}^{2n+2}  \frac{1}{N^{i/2}} |\grad^i
u(\cdot ,0)|^2\cr
& \leq \frac{ k_1}{N^{1/2}}.
\end{align} 
Hence, combining this estimate with the fact that $v(0,0)=0$ and by
choosing $N$ sufficiently large, we may
assume w.l.o.g. that  
\begin{align}
 \sup_{B_1(0)} \sum_{i=0}^{2n+2} |\grad^i v|^2(\ti x,0)  \leq \ti \ep(N) \label{newA2}
\end{align}
where $\ti \ep(N) \to 0$ as $N \to \infty$ ( $k_0,k_1,n$ are fixed in
this argument). Define  $2\rho = R = N^{\frac 1
  4}/400  \leq {1/c}$ ($c \leq \frac{1}{N^{1/4}}$ as we mentioned
above) and $p = p(n) = n+1$. Then $\rho \to \infty$ as $N \to \infty$.
Corollary \ref{CY2} implies that
\begin{align} 
\frac{d}{dt}E^{p}_\eta(v) + E^{p+1}_\eta(v)
 \le \frac{C}{\rho^{4p}}\int_{\R^n}
 |\Delta v|^2\gamma^{s-4p }\, \label{step1}
\end{align}
for all $t \leq \ti t_0$ for all $s> 4p+4$, where $\eta = \ga^s$, and
$\ga$ is a cutoff function as in \eqref{EQgamma}, and $C =
C(n,s)$. Choose $s = 4p(n) + 5 = 4n + 9$, so that $C = C(n)$.  
We know from the estimate \eqref{EQkeyestimate} that $Q (v) = |\lap
v|^2 \leq 2k_0$ on $ B_R(0) \times [0,\ti t_0] $
and
hence, combining this with \eqref{step1} we have
\begin{align} 
\frac{d}{dt}E^{p}_\eta(v) + E^{p+1}_\eta(v)
 & \leq \frac{C}{\rho^{4p}}\int_{\R^n}
 |\Delta v|^2\gamma^{s-4p} \cr
&\leq \frac{C}{\rho^{4p}} 
\int_{B_{2\rho}} 2k_0\cr
& = 2\omega_nk_0C \rho^{n-4p}  \label{remember}
\end{align}
which implies that 
\begin{align*} 
E^{p}_\eta(v)(t)  \leq (2\omega_n k_0 C + \omega_n k_1)  \rho^{n-4p} = (2\omega_n k_0 C + \omega_n k_1)  \rho^{-3n-4} 
\end{align*}
for all $t \leq \ti t_0 \leq 1$
in view of the fact that 
\begin{align*} 
E^{p}_\eta(v)(0) =  \int_{\R^n} |\Delta^{p} v_0|^2( \ti x  )\gamma^{s}d \ti x
& \leq \int_{B_{2\rho}(0)}  |\Delta^{p} v_0|^2(\ti x) d \ti x\cr 
& \leq \int_{B_{1/c}(0)}  |\Delta^{p} v_0|^2(\ti x) d \ti x\cr
& = \int_{B_{1/c} (0) }  c^{4p } |\Delta^{p} u_0|^2(c \ti x) d \ti x\cr
& = c^{4p - n } \int_{B_1{0}}   |\Delta^{p} u_0|^2( x) dx
  \cr
& \leq k_1 \omega_n c^{4p  - n }\cr
&\leq k_1 \omega_n \rho^{n-4p}  
\end{align*}
 where we have used assumption \eqref{assum2} again, the definition of
 $v$, the scaling properties of the derivatives of $u(cx,0)$,
 and the fact that $1/c \geq 2\rho \geq \rho $.
In particular,
\begin{align*} 
\int_{B_{1}(0)}  |\Delta^{p} v |^2 \leq (\omega_n 2k_0 C + k_1
\omega_n)  \rho^{-3n-4}\to 0 
\end{align*}
as $\rho \to \infty,$ in view of the fact that $ p = p(n) = n+1$.
We have shown  that
\begin{align*} 
\int_{B_{1}(0)}  |\Delta^{p} v |^2 \leq \ep_p(k_0,k_1,n,\rho)  
\end{align*}
for all $t\leq \ti t_0 \leq 1$ where $ \ep_p(k_0,k_1,n,\rho)    \to 0$ as
$\rho \to \infty$, that is, as $N \to \infty$.
We can similarly show that 
\begin{align*} 
\int_{B_{1}(0)}  |\Delta^{p-1} v |^2 \leq \ep_{p-1}(k_0,k_1,n,\rho) . 
\end{align*}

We also have
\begin{align*} 
\frac{d}{dt} \int_{B_1(0)} |\lap^{p-2} v|^2 & =  2 \int_{B_1(0)}
(\lap^{p-2} v)(\lap ^p v)\cr
& \leq  \int_{B_1(0)}
|\lap^{p-2} v|^2  +  \int_{B_1(0)} |\lap^p v|^2\cr
&\leq  
\int_{B_1(0)} |\lap^{p-2} v|^2 + \ep_p
\end{align*}
in view of Young's inequality, and the estimate just shown, 
and hence, after integrating in time from $0$ to $\ti t_0 \leq 1$
we see that 
\begin{align} 
\int_{B_1(0)} |\lap^{p-2} v|^2 \leq \ep_{p-2}(N) \label{inttime}
\end{align}
for all $t\in [0,\ti t_0 \leq 1] $ with $\ep_{p-2}(N) \to 0$ as $N \to \infty$: we leave out dependence on
$k_1,k_0,n$ since these  variables are fixed.
More explictly:  $f(t):= e^{-t} (\int_{B_1(0)} |\lap^{p-2} v|^2)(t) -
2t \ep_p $ satisfies $\frac{df}{dt}(t) \leq 0$ for all $0\leq t \leq
\ti t_0$
and $f(0) =  (\int_{B_1(0)}
|\lap^{p-2} v|^2)(0) \leq \ti \ep(N) \to 0 $ as $N \to \infty$ in
view of \eqref{newA2}, and so,  integrating $f$ from $0$ to $t_0$, we
see that the estimate \eqref{inttime} is true.


Continuing in this way, we get, for $N$ sufficiently large:
\begin{equation} 
\int_{B_1(0)} |\lap^{l} v|^2 \leq \ep_{l}(N) \label{lapsmall}
\end{equation}
for $l = p,p-2,p-4, \ldots, 1$ (or $0$).
Starting with $p-1$ instead of $p$, we similarly get
\begin{equation} 
\int_{B_1(0)} |\lap^{l} v|^2 \leq \ep_{l}(N)  \label{lapsmall2}
\end{equation}
for $l = p-1,p-3,p-5, \ldots, 1$ (or $0$), where $\ep_{l}(N) \to 0$ as $N \to
\infty$.
That is \begin{equation} 
\int_{B_1(0)} |\lap^{l} v|^2 \leq \ep_{l}(N) \label{lapsmall3}
\end{equation}
for $l \in \{0, \ldots, p\}$, where $\ep_{l}(N) \to 0$ as $N \to
\infty$.
 
Using the $L^2$ estimates, Lemma \ref{L2est} from the Appendix, and
the estimate \eqref{lapsmall3}  we see that
\begin{equation}
\int_{B_{1/2}(0)} |\grad^l v|^2 \leq  \hat \ep (N)
\end{equation}
for all $0\leq l \leq 2p= 2n +2$, where $\hat \ep(N) \to 0$ as $N \to
\infty$ (choose $\si = \si(n) = \frac{1}{4p(n)}$, so that $1- 2p \si
= 1/2$). 

Applying the Sobolev-Morrey inequality \cite[Theorem 6, Section 5.6.3]{Evans},  with
$p$, $k$ there equal to 2, and $2n+2$ respectively, we see that 
\[
\ti Q(0,t_0) = |\lap v|^2(0,t_0) \leq C(n) \Big(\sum_{l=0}^{2n+2}
\int_{B_{1/2}(0)} |\grad^l v|^2(\cdot,t_0)\Big)^{\frac 1 2} \leq
C(2n +3)^{\frac 1 2} (\hat \ep)^{\frac 1 2}
\]
and hence 
\[
\ti Q(0,t_0)  \to 0
\]
as $N \to \infty$. This contradicts the fact that  $\ti Q(0,t_0)  = 1$.

\end{proof}

The proof of Theorem \ref{main2} is essentially the same as the proof of Theorem \ref{main1}.

\begin{proof}[Proof of Theorem \ref{main2}.]
Replace $Q(x,t)$ by $Q_s(x,t)$, $e(x,t)$ by $e_s(x,t)$, $b(x,t)$
by $b_s(x,t)$, and $Q(u)$ by $Q_s(u)$ and repeat the above proof. At the point where
$|\lap v|^2 = Q (v) \leq 2k_0$ on $ B_R(0) \times [0,\ti t_0] $ is used in the
inequality \eqref{remember}, use instead the 
fact that 
$|\lap v|^2 \leq  |\grad^2 v|^2 \leq Q_s (v) \leq 2k_0$. 
Also choose $p(n)= n+(s/2)$ or $p= n + \frac{(s+1)}{2}$ in the proof: whichever is an integer.
The last part of the proof, where Morrey's embedding Theorem is used, has to be
slightly modified:
$Q_s (v)(0,t_0) = 1$ implies that $|\grad^r v|(0,t_0) \geq
\de(s)>0$ for some $r \in \{ 1, \ldots, s\}$ for some small $\de(s) >0$:
otherwise
the sum of the terms appearing in $Q_s(v)(0,t_0)$ would be less
than one.

Applying the Sobolev-Morrey inequality \cite[Theorem 6, Section 5.6.3]{Evans} with
$p,k$ there equal to 2, $2n+s$ respectively, we see that 
\begin{align*}
0 &< \de^2(s) \leq |\grad^r v|^2(0,t_0)
\\
  &\leq C(n,s) \Big( \sum_{l=0}^{2n+s} \int_{B_{1/2}(0)}|\grad^l v|^2(\cdot,t_0) \Big)^{1/2}
\\&\leq C (2n+1+s)^{\frac 1 2}  (\hat \ep (N))^{1/2},
\end{align*}
which leads to a contradiction if $N$ is chosen large enough, since $\hat
\ep(N) \to 0$ as $ N \to \infty$.
\end{proof}


\section{A priori Energy Estimates}
\label{STaprioriestimates}

In this section we shall prove some estimates for the weighted energies
\begin{equation}
E^k_\eta(u) = \int_{\R^n} |\Delta^ku|^2\,\eta,\quad \eta\in C^\infty_{loc}(\R^n),\quad \text{supp }\eta \subset\subset \R^n,
\label{EQenrgy}
\end{equation}
where $k\in\N_0$ and $u:\R^n\times[0,T] \rightarrow\R$ is a smooth solution to \eqref{bi}.
In the above equation and in that which follows all integrals are with respect to Lebesgue measure.
Note that $E^k_\eta$ are all finite for 
any $k\in\N_0$ and $t\in[0,T]$ since $u: \R^n \times [0,T] \to \R$ is smooth.
The purpose of the a priori estimates in this section is to quantify how global quantities such as the various
Sobolev norms of the solution behave along the flow \eqref{bi}.

\begin{lemma}
\label{LMlm1}
Let $u:\R^n\times[0,T]\rightarrow\R$ be a smooth solution to \eqref{bi}.
For all $t\in[0,T]$,
\begin{align}
\frac{d}{dt}&E^k_\eta(u) + 2E^{k+1}_\eta(u)
\notag\\
 &=
  - 2\int_{\R^n} (\Delta^{k+1}u)(\nabla_i\Delta^ku)(\nabla_i\eta)\,
  + 2\int_{\R^n} (\Delta^ku)(\nabla_i\Delta^{k+1}u)(\nabla_i\eta)\,.
\label{EQlm1}
\end{align}
for all $k \in \N_0$.
\end{lemma}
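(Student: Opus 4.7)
The plan is to differentiate $E^k_\eta(u)$ under the integral sign and then integrate by parts exactly twice, using that $\eta$ is compactly supported to kill boundary terms and that $u$ is smooth (so all quantities are absolutely integrable). The compact support of $\eta$ is the reason we are free to move derivatives across without having to track anything at infinity.

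First I would write
\begin{equation*}
\dt E^k_\eta(u) = 2\int_{\R^n} (\Delta^k u)(\Delta^k \partt u)\,\eta = -2\int_{\R^n} (\Delta^k u)(\Delta^{k+2}u)\,\eta,
\end{equation*}
using \eqref{bi} and the fact that $\Delta$ commutes with $\partt$ on smooth functions. The goal is now to extract the good term $-2E^{k+1}_\eta(u) = -2\int |\Delta^{k+1}u|^2\eta$ from $-2\int(\Delta^k u)(\Delta^{k+2}u)\eta$, with any residual terms involving only one $\nabla$ on $\eta$.

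Next I would integrate by parts twice, each time peeling one spatial derivative off $\Delta^{k+2} u = \grad_i\grad_i \Delta^{k+1}u$. The first integration by parts gives
\begin{equation*}
-2\int_{\R^n} (\Delta^k u)(\Delta^{k+2}u)\,\eta = 2\int_{\R^n} \grad_i(\Delta^k u)\grad_i(\Delta^{k+1}u)\,\eta + 2\int_{\R^n} (\Delta^k u)(\grad_i\Delta^{k+1}u)(\grad_i\eta).
\end{equation*}
The second integration by parts, applied to the first term on the right (transferring $\grad_i$ from $\grad_i(\Delta^{k+1}u)$ onto the product $\grad_i(\Delta^k u)\,\eta$), produces
\begin{equation*}
2\int_{\R^n}\grad_i(\Delta^k u)\grad_i(\Delta^{k+1}u)\,\eta = -2\int_{\R^n}|\Delta^{k+1}u|^2\,\eta - 2\int_{\R^n}(\Delta^{k+1}u)(\grad_i\Delta^k u)(\grad_i\eta).
\end{equation*}
Combining the two displays and bringing $2E^{k+1}_\eta(u)$ to the left yields exactly \eqref{EQlm1}.

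The computation is essentially mechanical; the only places where one has to be careful are the sign bookkeeping in the two integrations by parts and the justification that all boundary contributions vanish. The latter is automatic here because $\eta$ has compact support, so after each application of IBP no boundary integral appears. There is no serious analytic obstacle: smoothness of $u$ guarantees every integrand is $L^1_\loc$, and compact support of $\eta$ guarantees global integrability, so the differentiation under the integral sign and the two integrations by parts are all legitimate.
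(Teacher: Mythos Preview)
Your proof is correct and follows essentially the same route as the paper's: differentiate under the integral sign using \eqref{bi}, then integrate by parts twice (first moving one $\nabla_i$ off $\Delta^{k+2}u$, then one $\nabla_i$ off $\nabla_i\Delta^{k+1}u$), relying on the compact support of $\eta$ to discard boundary terms. The sign bookkeeping matches and the justification you give for differentiation under the integral and for the vanishing of boundary contributions is exactly what is needed.
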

\begin{proof}
Differentiating,
\begin{align*}
\frac{d}{dt}E^k_\eta(u)
 &= 2\int_{\R^n} (\Delta^ku)(-\Delta^{k+2}u)\,\eta\,
\\
 &= 2\int_{\R^n} (\nabla_i\Delta^ku)(\nabla_i\Delta^{k+1}u)\,\eta\,
  + 2\int_{\R^n} (\Delta^ku)(\nabla_i\Delta^{k+1}u)(\nabla_i\eta)\,
\\
 &= -2\int_{\R^n} |\Delta^{k+1}u|^2\,\eta\,
  - 2\int_{\R^n}      (\nabla_i\Delta^ku)(\nabla_i\eta) (\Delta^{k+1}u)\,
\\&\qquad
  + 2\int_{\R^n} (\Delta^ku)(\nabla_i\Delta^{k+1}u)(\nabla_i\eta)\,
\\
 &= -2E^{k+1}_\eta(u)
\\&\qquad
  - 2\int_{\R^n} (\Delta^{k+1}u)(\nabla_i\Delta^ku)(\nabla_i\eta)\,
  + 2\int_{\R^n} (\Delta^ku)(\nabla_i\Delta^{k+1}u)(\nabla_i\eta)\,.
\end{align*}
Rearranging gives the lemma.
\end{proof}

We now specialise by setting $\eta = \gamma^s$, $s>0$ to be chosen, and $\gamma \in
C^\infty_{\text{loc}}(\R^n)$ satisfying
\begin{equation}
\label{EQgamma}
\tag{\ensuremath{\gamma}}
\chi_{B_\rho(0)} \le \gamma \le \chi_{B_{2\rho}(0)},\quad \rho>0,
\quad\text{and}\quad
|\nabla\gamma| \le \frac{c_{\gamma}}\rho,\quad|\nabla^2\gamma|\le \frac{c_{\gamma}}{\rho^2}\,,
\end{equation}
where $c_{\gamma}\ge1$ is an absolute constant depending only on $n$.

\newcommand{\cg}{c_{\gamma}}

In the following proofs we make extensive use of the elementary inequality
\begin{equation}
\label{EQcauchy}
ab \le \varepsilon a^2 + \frac1{4\varepsilon}b^2
\end{equation}
for $a$, $b$ real numbers and $\varepsilon>0$.

\begin{lemma}
\label{LMlm2}
Let $u\in C^\infty_{\text{loc}}(\R^n)$.
Suppose $\eta=\gamma^s$, $s>8$, and $\gamma$, $\cg$ are as in \eqref{EQgamma}.
Then for any $\varepsilon>0$ and for all $k \in \N$ we have
\begin{align*}
- 2\int_{\R^n} (\Delta^{k+1}u)(\nabla_i\Delta^ku)(\nabla_i\eta)\,
&\le \varepsilon E_\eta^{k+1}(u)
 + \frac{c_{1}(\varepsilon,s,n) }{\rho^{8}}\int_{\R^n} |\Delta^{k-1}u|^2\gamma^{s-8}\,,
\end{align*}
where $c_{1}(\varepsilon,s,n) < \infty $ is a constant depending on $
\varepsilon,s$ and $n$.
\end{lemma}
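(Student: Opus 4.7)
The plan is to apply Young's inequality \eqref{EQcauchy} directly to separate $\Delta^{k+1}u$ from $\nabla_i\Delta^k u$ in the integrand, producing the $\varepsilon E^{k+1}_\eta(u)$ term at once, and then to reduce the residual $\int_{\R^n}|\nabla\Delta^k u|^2\gamma^{s-2}$ to the desired $\int_{\R^n}|\Delta^{k-1}u|^2\gamma^{s-8}$ by iterated integration by parts. Each integration by parts will contribute a factor of $\rho^{-2}$ or $\rho^{-4}$, so that the accumulated weight will match the dimensional gap between $\Delta^{k+1}u$ (order $2k+2$) and $\Delta^{k-1}u$ (order $2k-2$).

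First I would use $|\nabla\eta|^2/\eta = s^2|\nabla\gamma|^2\gamma^{s-2} \le s^2\cg^2\gamma^{s-2}/\rho^2$ together with \eqref{EQcauchy}, splitting off a small $\varepsilon'E^{k+1}_\eta(u)$ from $-2\int_{\R^n}(\Delta^{k+1}u)(\nabla_i\Delta^k u)(\nabla_i\eta)$ at the cost of $\frac{C(s)}{\varepsilon'\rho^2}\int_{\R^n}|\nabla\Delta^k u|^2\gamma^{s-2}$. Then I would integrate by parts once to rewrite
\[
\int_{\R^n}|\nabla\Delta^k u|^2\gamma^{s-2} = -\int_{\R^n}\Delta^k u\cdot \Delta^{k+1}u\cdot \gamma^{s-2} - (s-2)\int_{\R^n}\Delta^k u\cdot \nabla_i\Delta^k u\cdot \gamma^{s-3}\nabla_i\gamma,
\]
applying \eqref{EQcauchy} with parameters of the form $\alpha\rho^2$ (the factor $\rho^2$ is dictated by scaling) to the first right-hand integral and absorbing the self-referential piece arising from the cross term back to the left. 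This would produce an intermediate estimate
\[
\int_{\R^n}|\nabla\Delta^k u|^2\gamma^{s-2} \le C\alpha\rho^2\int_{\R^n}|\Delta^{k+1}u|^2\gamma^s + \frac{C(\alpha,s)}{\rho^2}\int_{\R^n}|\Delta^k u|^2\gamma^{s-4}.
\]

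To eliminate the middle integral $\int_{\R^n}|\Delta^k u|^2\gamma^{s-4}$, I would repeat the procedure but now transfer both Laplacians: writing $\Delta^k u = \Delta(\Delta^{k-1}u)$ and using $\int f\,\Delta g = \int (\Delta f)\,g$, the principal integral becomes $\int_{\R^n}\Delta^{k+1}u\cdot \Delta^{k-1}u\cdot \gamma^{s-4}$, which weighted Young bounds by $\alpha'\rho^4\int_{\R^n}|\Delta^{k+1}u|^2\gamma^s+(4\alpha'\rho^4)^{-1}\int_{\R^n}|\Delta^{k-1}u|^2\gamma^{s-8}$. The residual terms from $\nabla\gamma^{s-4}$ and $\Delta\gamma^{s-4}$ carry explicit $\rho^{-1}$ and $\rho^{-2}$ decay and can be absorbed by the two prior bounds after one further Young application. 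Chaining the three estimates and choosing $\varepsilon'$, $\alpha$, $\alpha'$ proportional to $\varepsilon$, the $\rho$-powers multiply as $\rho^{-2}\cdot\rho^{-2}\cdot\rho^{-4} = \rho^{-8}$, which gives the factor in the statement.

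The main obstacle will be keeping the $\rho$-exponents correct through the nested Young inequalities. Scaling analysis (the eight-derivative gap between $\Delta^{k+1}u$ and $\Delta^{k-1}u$ combined with the $\rho$-scaling of $\gamma$) forces the Young parameters at each step to be of the form $\alpha\rho^p$ with the right $p$; any use of a $\rho$-independent weight would spoil the final $\rho^{-8}$. The hypothesis $s>8$ is needed throughout to guarantee that $\gamma^{s-j}$ remains a non-negative weight for each exponent encountered ($j\in\{2,3,4,5,6,8\}$).
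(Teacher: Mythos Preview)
Your proposal is correct and follows essentially the same approach as the paper. The only difference is organizational: the paper packages your third step---the interpolation estimate $\int_{\R^n}|\Delta^k u|^2\gamma^{s-4} \le \delta\rho^4\int_{\R^n}|\Delta^{k+1}u|^2\gamma^s + c_\delta\rho^{-4}\int_{\R^n}|\Delta^{k-1}u|^2\gamma^{s-8}$---as a separate lemma (Lemma~\ref{LMinterp1} in the Appendix) and simply invokes it, whereas you carry out that double integration by parts inline.
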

\begin{proof}
Throughout the proof $\delta_i$ denote positive parameters to be chosen.
Using \eqref{EQcauchy} and \eqref{EQgamma},
\begin{align}
- 2\int_{\R^n} (\Delta^{k+1}u)(\nabla_i\Delta^ku)(\nabla_i\eta)\,
&=
- 2s\int_{\R^n} (\Delta^{k+1}u)(\nabla_i\Delta^ku)(\nabla_i\gamma)\gamma^{s-1}
\notag\\
&\le \delta_1 E_\eta^{k+1}(u)
      + \frac{\cg^2s^2}{\delta_1\rho^2} \int_{\R^n} |\nabla\Delta^ku|^2\gamma^{s-2}\,.
\label{EQlm2begeq1}
\end{align}
Now,
\begin{align*}
\int_{\R^n}& |\nabla\Delta^ku|^2\gamma^{s-2}
\\
&=
-\int_{\R^n} (\Delta^ku)(\Delta^{k+1}u)\gamma^{s-2}
-(s-2)\int_{\R^n} (\Delta^ku)(\nabla_i\Delta^{k}u)(\nabla_i\gamma)\gamma^{s-3}
\\
&\le
\rho^2\delta_2\int_{\R^n} |\Delta^{k+1}u|^2\gamma^{s}
+ \frac1{4\delta_2\rho^2}\int_{\R^n} |\Delta^ku|^2\gamma^{s-4}
\\
&\qquad
+ \frac12 \int_{\R^n} |\nabla\Delta^ku|^2\gamma^{s-2}
+ \frac{\cg^2(s-2)^2}{2\rho^2}\int_{\R^n} |\Delta^ku|^2\gamma^{s-4}\\
&= \rho^2\delta_2\int_{\R^n} |\Delta^{k+1}u|^2\gamma^{s}
+ \frac12 \int_{\R^n} |\nabla\Delta^ku|^2\gamma^{s-2}\\
&\qquad 
\frac{1}{2\rho^2}(\frac{1}{2\de_2} +  \cg^2(s-2)^2)\int_{\R^n} |\Delta^ku|^2\gamma^{s-4}
\end{align*}
Absorbing the second term on the right into the left we obtain
\begin{align}
\notag
\int_{\R^n} &|\nabla\Delta^ku|^2\gamma^{s-2}
\\
&\le
2\rho^2\delta_2\int_{\R^n} |\Delta^{k+1}u|^2\gamma^{s}
+ \frac1{\rho^2}\Big(\frac1{2\delta_2} + \cg^2(s-2)^2\Big)
  \int_{\R^n} |\Delta^ku|^2\gamma^{s-4}\,.
\label{EQlm2mideq1}
\end{align}
We now need an interpolation inequality.
From Lemma \ref{LMinterp1} we know that
\begin{align*}
 \frac1{\rho^2}&\int_{\R^n} |\Delta^ku|^2\gamma^{s-4}
\\
&\leq \delta_3\rho^2\int_{\R^n} |\Delta^{k+1}u|^2\gamma^{s}
  + \frac{c_{\delta_3}}{\rho^6}
    \int_{\R^n} |\Delta^{k-1}u|^2\gamma^{s-8}
\,,
\end{align*}
where $c_{\delta_3} = c_{\delta_3}(\delta_3,s,n) = 2^4\big(\frac1{\delta_3} + 2^9s^4\cg^4\big)$.
Using this in \eqref{EQlm2mideq1} we obtain
\begin{align*}
\int_{\R^n} |\nabla\Delta^ku|^2\gamma^{s-2}
& \leq 2\rho^2 \de_2 \int_{\R^n} |\Delta^{k+1}u|^2\gamma^{s}
\\
& \hskip-1.5cm + \left(\frac{1}{2\de_2} + \cg^2(s-2)^2\right) \Big(
\delta_3\rho^2\int_{\R^n} |\Delta^{k+1}u|^2\gamma^{s}
  + \frac{c_{\delta_3}}{\rho^6}
    \int_{\R^n} |\Delta^{k-1}u|^2\gamma^{s-8}
 \Big)
\\
& =  \left( 2\rho^2 \de_2  + \de_3\rho^2 \left(\frac{1}{2\de_2} + \cg^2(s-2)^2\right)\right)
\int_{\R^n} |\Delta^{k+1}u|^2\gamma^{s}\\
&\quad + \frac{c_{\de_3}}{\rho^6} \left(\frac{1}{2\de_2} + \cg^2(s-2)^2\right)
\int_{\R^n} |\Delta^{k-1}u|^2\gamma^{s-8}
\end{align*}
and hence
\begin{align}
\frac1{\rho^2}\int_{\R^n} |\nabla\Delta^ku|^2\gamma^{s-2}
&\le
\bigg(
    \delta_3\Big(\frac1{2\delta_2} + \cg^2(s-2)^2\Big)
 + 2\delta_2\bigg)\int_{\R^n} |\Delta^{k+1}u|^2\gamma^{s}
\notag\\&\quad
 + \frac{c_{\delta_3}}{\rho^8}
                 \Big(\frac1{2\delta_2} + \cg^2(s-2)^2\Big)
   \int_{\R^n} |\Delta^{k-1}u|^2\gamma^{s-8}\,.
\label{EQstar}
\end{align}
Combining \eqref{EQstar} with \eqref{EQlm2begeq1} gives
\begin{align*}
- 2\int_{\R^n} &(\Delta^{k+1}u)(\nabla_i\Delta^ku)(\nabla_i\eta)\,
\\&\le
    \bigg[\delta_1 
      + \frac{\cg^2s^2}{\delta_1}
\bigg(
    \delta_3\Big(\frac1{2\delta_2} + \cg^2(s-2)^2\Big)
 + 2\delta_2\bigg)\bigg] E_\eta^{k+1}(u)
\\&\qquad
 + \frac{\cg^2s^2}{\delta_1}
   \frac{c_{\delta_3}}{\rho^8}
                 \Big(\frac1{2\delta_2} + \cg^2(s-2)^2\Big)
                                \int_{\R^n} |\Delta^{k-1}u|^2\gamma^{s-8}
\,.
\end{align*}
We choose $\delta_i = \delta_i(s,\ep,n) >0$ so that
\begin{equation*}
    \bigg[\delta_1 
      + \frac{\cg^2s^2}{\delta_1}
\bigg(
    \delta_3\Big(\frac1{2\delta_2} + \cg^2(s-2)^2\Big)
 + 2\delta_2\bigg)\bigg] \le \varepsilon
\,.
\end{equation*}
This can be achieved by the choice
\begin{align*}
\delta_1 = \frac{\varepsilon}4,\quad
\delta_2 = \frac{\varepsilon^2}{32\cg^2s^2},\quad
\delta_3 = \frac{\varepsilon^4}{8\cg^4s^2(16s^2 + \varepsilon^2(s-2)^2)}
\end{align*}
for example.
Recalling the definition of $c_{\delta_3} = c(\delta_3,s,n) = 2^4\big(\frac1{\delta_3} + 2^9s^4\cg^4\big)$ yields the result.
\end{proof}

\begin{lemma}
\label{LMlm3}
Let $u\in C^\infty_{\text{loc}}(\R^n)$.
Suppose $\eta=\gamma^s$, $s>8$, and $\gamma$, $\cg$ are as in \eqref{EQgamma}.
Then for any $\varepsilon>0$ and for all $ k \in \N$ we have
\begin{align*}
  2\int_{\R^n} (\Delta^ku)(\nabla_i\Delta^{k+1}u)(\nabla_i\eta)\,
&\le \varepsilon E_\eta^{k+1}(u)
 + \frac{c_{2}(\varepsilon,s,n)}{\rho^{8}}\int_{\R^n} |\Delta^{k-1}u|^2\gamma^{s-8}\,,
\end{align*}
where $c_2 (\varepsilon,s,n) < \infty $ is a constant depending on
$\varepsilon, s , n.$
%
\end{lemma}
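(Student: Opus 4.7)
My plan is to reduce Lemma \ref{LMlm3} to Lemma \ref{LMlm2} by integration by parts, together with one extra term that admits an essentially parallel treatment.

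Write $\eta = \gamma^s$ as in the hypothesis. First I would integrate by parts to move the gradient off of $\Delta^{k+1}u$, giving the identity
\begin{align*}
2\int_{\R^n} (\Delta^ku)(\nabla_i\Delta^{k+1}u)(\nabla_i\eta)
 &= -2\int_{\R^n} (\nabla_i\Delta^ku)(\Delta^{k+1}u)(\nabla_i\eta) \\
 &\qquad - 2\int_{\R^n} (\Delta^ku)(\Delta^{k+1}u)(\Delta\eta).
\end{align*}
The first term on the right is precisely the quantity estimated in Lemma \ref{LMlm2}, so it contributes at most $\tfrac{\varepsilon}{2} E_\eta^{k+1}(u) + \tfrac{c_1(\varepsilon/2,s,n)}{\rho^8}\int_{\R^n}|\Delta^{k-1}u|^2\gamma^{s-8}$. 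This part is free.

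For the new term, I would use the chain rule to compute
$\Delta\eta = s(s-1)\gamma^{s-2}|\nabla\gamma|^2 + s\gamma^{s-1}\Delta\gamma$, so by \eqref{EQgamma} and $\gamma\le1$ we get the pointwise bound $|\Delta\eta|\le C(s,\cg)\rho^{-2}\gamma^{s-2}$. Then Young's inequality \eqref{EQcauchy} with a small parameter $\delta_1>0$ yields
\begin{align*}
-2\int_{\R^n}(\Delta^ku)(\Delta^{k+1}u)(\Delta\eta)
 &\le \delta_1 E_\eta^{k+1}(u)
  + \frac{C(\delta_1,s,n)}{\rho^{4}}\int_{\R^n}|\Delta^ku|^2\gamma^{s-4}.
\end{align*}
The remaining integral is the same quantity encountered in the proof of Lemma \ref{LMlm2}, and the interpolation inequality Lemma \ref{LMinterp1} provides
\begin{align*}
\frac{1}{\rho^4}\int_{\R^n}|\Delta^ku|^2\gamma^{s-4}
 &\le \delta_2\int_{\R^n}|\Delta^{k+1}u|^2\gamma^s
  + \frac{c_{\delta_2}}{\rho^{8}}\int_{\R^n}|\Delta^{k-1}u|^2\gamma^{s-8},
\end{align*}
with $c_{\delta_2}=c_{\delta_2}(\delta_2,s,n)$ as in Lemma \ref{LMinterp1}.

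Finally I would choose $\delta_1=\varepsilon/4$ and $\delta_2$ so small (depending on $\varepsilon$, $s$, $n$) that the accumulated coefficient of $E_\eta^{k+1}(u)$ on the right-hand side, namely $\tfrac{\varepsilon}{2} + \delta_1 + C(\delta_1,s,n)\,\delta_2$, does not exceed $\varepsilon$. Combining all of the $|\Delta^{k-1}u|^2\gamma^{s-8}$ contributions gives the lemma with
\[
c_2(\varepsilon,s,n) := c_1(\varepsilon/2,s,n) + C(\varepsilon/4,s,n)\,c_{\delta_2}(\varepsilon,s,n).
\]
The only real subtlety is bookkeeping: making sure the exponents of $\gamma$ match after each integration by parts and application of \eqref{EQgamma} (which uses $s>8$ critically in two places), and that the order of smallness of the $\delta_i$ is chosen so that the absorption into $\varepsilon E_\eta^{k+1}(u)$ really closes. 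Since the structure is identical to Lemma \ref{LMlm2}, I expect no essentially new difficulty beyond handling the extra $\Delta\eta$ factor.
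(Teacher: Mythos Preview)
Your proposal is correct and follows essentially the same approach as the paper: integrate by parts once, handle the first resulting term by invoking Lemma~\ref{LMlm2} with $\varepsilon/2$, estimate the $\Delta\eta$ term via Young's inequality together with the pointwise bound on $\Delta\eta$, and then apply the interpolation Lemma~\ref{LMinterp1} to trade $\int|\Delta^ku|^2\gamma^{s-4}$ for $E^{k+1}_\eta(u)$ and $\int|\Delta^{k-1}u|^2\gamma^{s-8}$ before choosing the parameters. The paper carries the constants through explicitly rather than writing $C(\delta_1,s,n)$, but the structure and every essential step are identical.
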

\begin{proof}
Again, throughout the proof $\delta_i>0$ denote positive parameters to be chosen.
Integrating by parts,
\begin{align}
  2\int_{\R^n} (\Delta^ku)(\nabla_i\Delta^{k+1}u)(\nabla_i\eta)\,
 &= 
  - 2\int_{\R^n} (\nabla_i\Delta^ku)(\Delta^{k+1}u)(\nabla_i\eta)\,
\notag\\&\qquad
  - 2\int_{\R^n} (\Delta^ku)(\Delta^{k+1}u)(\Delta\eta)\,
\label{EQlm3begeq1}
\end{align}
Lemma \ref{LMlm2} deals with the first term:
\begin{equation}
  - 2\int_{\R^n} (\nabla_i\Delta^ku)(\Delta^{k+1}u)(\nabla_i\eta)\,
\le \frac{\varepsilon}{2} E_\eta^{k+1}(u)
 + \frac{c_{1} ( \frac{\varepsilon}{2},s,n)}{\rho^{8}}\int_{\R^n} |\Delta^{k-1}u|^2\gamma^{s-8}\,.
\label{EQlm3begeq2}
\end{equation}
Since $\Delta\eta = s\gamma^{s-1}\Delta\gamma + s(s-1)\gamma^{s-2}|\nabla\gamma|^2$ we have
\begin{align}
  - 2\int_{\R^n}& (\Delta^ku)(\Delta^{k+1}u)(\Delta\eta)\,
\notag\\
\label{EQstarstar}
&\le
  2\delta_1\int_{\R^n} |\Delta^{k+1}u|^2\eta\,
  + \frac{1}{\delta_1\rho^4}\Big(
       \cg^2s^2 + \cg^4s^2(s-1)^2
    \Big)\int_{\R^n} |\Delta^ku|^2\gamma^{s-4}\,\, ,
\end{align}
where we used the fact that $\ga^{s-2}(\cdot) \leq \ga^{s-4}(\cdot)$,
which is true since $0\leq \ga(\cdot) \leq 1$ on $\R^n$.
Lemma \ref{LMinterp1} yields the estimate
\begin{align*}
\frac{1}{\delta_1\rho^4}&\Big(
   \cg^2s^2 + \cg^4s^2(s-1)^2
\Big)\int_{\R^n} |\Delta^ku|^2\gamma^{s-4}
\\
&\le
\frac{\delta_2}{\delta_1}\Big(
   \cg^2s^2 + \cg^4s^2(s-1)^2
\Big)\int_{\R^n} |\Delta^{k+1}u|^2\gamma^{s}
\\&\quad+
\frac{c_{\delta_2}}{\delta_1\rho^8}\Big(
   \cg^2s^2 + \cg^4s^2(s-1)^2
\Big)\int_{\R^n} |\Delta^{k-1}u|^2\gamma^{s-8}
\end{align*}
where $c_{\delta_2} = c(\delta_2,s,n) =
2^4\big(\frac1{\delta_2} + 2^9s^4\cg^4\big)$.
Combining this with \eqref{EQstarstar} we get
\begin{align}
  - 2\int_{\R^n} (\Delta^ku)(\Delta^{k+1}u)(\Delta\eta)\,
&\le
  \bigg(2\delta_1 + \frac{\delta_2}{\delta_1}\Big(
       \cg^2s^2 + \cg^4s^2(s-1)^2
                \Big)
  \bigg)\int_{\R^n} |\Delta^{k+1}u|^2\eta\,
\notag
\\*&\hskip-1cm
 + \frac{c_{\delta_2}}{\delta_1\rho^{8}}
   \left(
       \cg^2s^2 + \cg^4s^2(s-1)^2
   \right)\int_{\R^n} |\Delta^{k-1}u|^2\gamma^{s-8}\,.
\label{EQlm3begeq3}
\end{align}
Combining \eqref{EQlm3begeq3} with
\eqref{EQlm3begeq1}-\eqref{EQlm3begeq2} and choosing $\delta_i =
\de_i(\varepsilon,n,s) >0$ so that
\[
\frac{\delta_2}{\delta_1}\Big(
       \cg^2s^2 + \cg^4s^2(s-1)^2
                \Big)
      + 2\delta_1
= \varepsilon/2
\]
yields the result.
One possible choice is
\begin{align*}
\delta_1 = \frac{\varepsilon}{16},\quad
\delta_2 = \frac{\varepsilon^2}{128\cg^2\Big( s^2 + \cg^2s^2(s-1)^2 \Big)}
\,.
\end{align*}
\end{proof}

\begin{corollary}
\label{CY1}
Let $u:\R^n\times[0,T]\rightarrow\R$ be a smooth solution to \eqref{bi}.
Suppose $\eta=\gamma^s$, $s>8$, and $\gamma$, $c_\gamma$, are as in
\eqref{EQgamma}, and $k \in \N$.
Then
\begin{equation*}
\frac{d}{dt}E^k_\eta(u) + \frac32E^{k+1}_\eta(u)
 \le \frac{c_3(n,s)}{\rho^{8}}\int_{\R^n} |\Delta^{k-1}u|^2\gamma^{s-8}dx\,,
\end{equation*}
where $c_3(n,s)$ is constant depending only on $n$ and $s$.
\end{corollary}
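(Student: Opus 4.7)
The plan is to assemble Corollary \ref{CY1} as an immediate consequence of the three preceding results: Lemma \ref{LMlm1} produces the exact identity for $\frac{d}{dt}E^k_\eta(u) + 2E^{k+1}_\eta(u)$, and the two boundary-like terms on the right-hand side of that identity are precisely what Lemma \ref{LMlm2} and Lemma \ref{LMlm3} control.

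Concretely, I would start from the identity in Lemma \ref{LMlm1},
\begin{align*}
\frac{d}{dt}E^k_\eta(u) + 2E^{k+1}_\eta(u)
&= -2\int_{\R^n}(\Delta^{k+1}u)(\nabla_i\Delta^k u)(\nabla_i\eta)
\\&\quad + 2\int_{\R^n}(\Delta^k u)(\nabla_i\Delta^{k+1}u)(\nabla_i\eta),
\end{align*}
and then apply Lemma \ref{LMlm2} and Lemma \ref{LMlm3} to the two integrals on the right, in each case with the choice $\varepsilon = 1/4$. This produces two terms of the form $\tfrac{1}{4}E^{k+1}_\eta(u)$ that together absorb $\tfrac12 E^{k+1}_\eta(u)$ from the left-hand side (moving $2 - \tfrac12 = \tfrac32$ over), together with two lower-order error terms of the form $\rho^{-8}\int |\Delta^{k-1}u|^2 \gamma^{s-8}$. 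Setting $c_3(n,s) := c_1(\tfrac14, s, n) + c_2(\tfrac14, s, n)$, which is finite by the lemmas, yields exactly the stated inequality.

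There is essentially no obstacle here: Lemmas \ref{LMlm2} and \ref{LMlm3} were engineered with this corollary in mind, so the only things to check are arithmetic (that $2 - 2 \cdot \tfrac14 = \tfrac32$) and that the hypotheses ($s > 8$, the cutoff $\gamma$ as in \eqref{EQgamma}, and $k \in \N$) match those required by both lemmas — which they do verbatim. I would therefore present the proof as a short three-line computation: state the identity from Lemma \ref{LMlm1}, estimate each of the two right-hand terms by the corresponding lemma with $\varepsilon = 1/4$, and rearrange.
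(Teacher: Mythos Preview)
Your proposal is correct and matches the paper's proof essentially line for line: the paper likewise invokes Lemma \ref{LMlm1}, applies Lemmas \ref{LMlm2} and \ref{LMlm3} with $\varepsilon_1=\varepsilon_2=\tfrac14$, and absorbs the resulting $\tfrac12 E^{k+1}_\eta(u)$ into the left-hand side. The only cosmetic difference is that the paper writes the combined constant as $c_3/2$ in the intermediate step, so its $c_3$ is twice your $c_1(\tfrac14,s,n)+c_2(\tfrac14,s,n)$.
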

\begin{proof}
We combine Lemmata \ref{LMlm1}--\ref{LMlm3} as follows.
Lemma \ref{LMlm1} states that
\begin{align}
\frac{d}{dt}&E^k_\eta(u) + 2E^{k+1}_\eta(u)
\notag\\
\label{EQcy1eq1}
 &=
  - 2\int_{\R^n} (\Delta^{k+1}u)(\nabla_i\Delta^ku)(\nabla_i\eta)\,
  + 2\int_{\R^n} (\Delta^ku)(\nabla_i\Delta^{k+1}u)(\nabla_i\eta)\,.
\end{align}
The two terms on the right hand side are estimated by Lemma \ref{LMlm2} and Lemma \ref{LMlm3} respectively.
Adding together the estimates we find, for any $\varepsilon_1, \varepsilon_2 > 0$,
\begin{align*}
  - 2\int_{\R^n}& (\Delta^{k+1}u)(\nabla_i\Delta^ku)(\nabla_i\eta)
  + 2\int_{\R^n} (\Delta^ku)(\nabla_i\Delta^{k+1}u)(\nabla_i\eta)
\\
&\le (\varepsilon_1
      + \varepsilon_2) E_\eta^{k+1}(u)
 + \frac{c_1(\varepsilon_1,n,s) + c_{2}(\varepsilon_2,n,s) }{\rho^{8}}\int_{\R^n} |\Delta^{k-1}u|^2\gamma^{s-8}.
\end{align*}
In particular choosing $\varepsilon_i = \frac14$ and combining this with \eqref{EQcy1eq1} we have
\begin{align*}
\frac{d}{dt}E^k_\eta(u) + 2E^{k+1}_\eta(u)
\le \frac12
      E_\eta^{k+1}(u)
 + \frac{c_3}{2\rho^{8}}\int_{\R^n} |\Delta^{k-1}u|^2\gamma^{s-8}\,,
\end{align*}
where $c_3$ is a constant depending only on $s$ and $n$.
Absorbing the first term on the right into the left yields the claimed estimate.
\end{proof}

\begin{corollary}
\label{CY2}
Let $u:\R^n\times[0,T]\rightarrow\R$ be a smooth solution to \eqref{bi}.
Suppose $k\in\N $, $\eta=\gamma^s$, $s>4k$, where $\gamma$, $\cg$ are as in \eqref{EQgamma}.
Then 
\begin{equation*}
\frac{d}{dt}E^k_\eta(u) + E^{k+1}_\eta(u)
 \le \frac{c_4(n,s)}{\rho^{4k}}\int_{\R^n} |\Delta u|^2\gamma^{s-4k}\,,
\end{equation*}
where $c_4(n,s)$ is a constant depending only on $n$ and $s$.
\end{corollary}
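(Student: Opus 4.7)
The plan is to use Corollary \ref{CY1} as the main tool, combined with an iterated interpolation derived from Lemma \ref{LMinterp1}, treating the trivial case $k=1$ separately from the general case $k\ge 2$.

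For $k=1$, the target estimate $\tfrac{d}{dt}E^1_\eta + E^2_\eta \le c_4/\rho^4 \int|\Delta u|^2\gamma^{s-4}$ is \emph{not} a direct consequence of Corollary \ref{CY1} (which would instead produce an $\int|u|^2\gamma^{s-8}$ term on the right). I would derive it by retracing the proofs of Lemma \ref{LMlm2} and Lemma \ref{LMlm3}, halting the chain of manipulations \emph{one step earlier}, at the point just before Lemma \ref{LMinterp1} is invoked to lower the order from $\Delta^k$ to $\Delta^{k-1}$. Keeping $\int|\Delta^k u|^2\gamma^{s-4}$ on the right-hand side reads, in the case $k=1$, as $\int|\Delta u|^2\gamma^{s-4}$; choosing the free parameters sufficiently small closes this case.

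For $k\ge 2$, I would start from Corollary \ref{CY1},
\[
\frac{d}{dt}E^k_\eta + \tfrac{3}{2}E^{k+1}_\eta \le \frac{c_3(n,s)}{\rho^{8}}\int|\Delta^{k-1}u|^2\gamma^{s-8},
\]
and bound the right-hand side via the algebraic interpolation
\[
\int|\Delta^{k-1}u|^2\gamma^{s-8} \le \delta\rho^{8} E^{k+1}_\eta + \frac{C_\delta(n,s)}{\rho^{4k-8}}\int|\Delta u|^2\gamma^{s-4k}, \qquad (\ast)
\]
valid for any $\delta>0$. The exponents $\rho^{8}$ and $\rho^{8-4k}$ are forced by homogeneity, since each of the three integrands scales like $\rho^{n-4(k-1)}$. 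I would prove $(\ast)$ by iterating Lemma \ref{LMinterp1} in a paired down/up scheme: one application drops the order from $\Delta^{k-1}$ to $\Delta^{k-2}$ but introduces a $\int|\Delta^k u|^2\gamma^{s-4}$ term, which a second application at level $k$ replaces with $E^{k+1}_\eta$ plus a reappearing copy of the original $\int|\Delta^{k-1}u|^2\gamma^{s-8}$ term that is absorbed on the left. Iterating this procedure lowers the order of the remaining low-order integrand by one per round and reaches $\Delta u$ after $k-2$ rounds. Substituting $(\ast)$ back and choosing $\delta$ so that $c_3\delta\le\tfrac12$ allows absorption of the $E^{k+1}_\eta$ contribution into the left-hand side, yielding CY2 with $c_4(n,s) = 2c_3(n,s)C_\delta(n,s)$.

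The main obstacle is the clean execution of $(\ast)$. Each round of Lemma \ref{LMinterp1} reintroduces a copy of the top-order integrand $\int|\Delta^{k-1}u|^2\gamma^{s-8}$, and because the constant $c_{\delta_3}$ in that lemma behaves like $1/\delta_3$, the product $\delta\cdot c_{\delta_3}$ is not automatically small. This forces a two-scale choice of the small parameters: first fix $\delta'$ moderately small so that $c_{\delta'}$ is a harmless $O(1)$ constant depending only on $n$ and $s$, then choose $\delta$ much smaller still so that $\delta c_{\delta'}\le\tfrac12$ and the absorption succeeds. Propagating such a choice through the $k-2$ rounds while keeping the final constant $C_\delta(n,s)$ finite is the delicate book-keeping; alternatively, one may simply invoke a stand-alone weighted Gagliardo--Nirenberg interpolation from the Appendix, in which case $(\ast)$ follows in one step.
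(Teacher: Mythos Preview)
Your proposal is correct and follows essentially the same route as the paper: the $k=1$ case is handled by stopping the Lemma~\ref{LMlm2}/\ref{LMlm3} computations before the final interpolation (the paper's \eqref{EQStar} and \eqref{EQStarstar} are exactly those intermediate estimates), and for $k\ge 2$ the paper starts from Corollary~\ref{CY1} and derives your inequality $(\ast)$ via two applications of Corollary~\ref{LMinterp2}, which is precisely the iterated form of Lemma~\ref{LMinterp1} you describe (and which you also note can be cited directly from the Appendix). Your ``two-scale'' absorption concern is real and is handled in the paper's proof of Corollary~\ref{LMinterp2} in exactly the way you suggest.
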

\begin{proof}

We first consider the case where $k=1$.
Using Lemma \ref{LMlm1} and integration by parts we find
\begin{align}
\frac{d}{dt}&E^1_\eta(u) + 2E^2_\eta(u)
\notag\\
 &=
  - 2\int_{\R^n} (\Delta^2 u)(\nabla_i\Delta u)(\nabla_i\eta)\,
  + 2\int_{\R^n} (\Delta u)(\nabla_i\Delta^2 u)(\nabla_i\eta)\,
\notag\\
 &=
  - 4\int_{\R^n} (\Delta^2 u)(\nabla_i\Delta u)(\nabla_i\eta)\,
  - 2\int_{\R^n} (\Delta u)(\Delta^2 u)(\Delta\eta)\,.
\label{EQzero}
\end{align}
We claim that
\begin{equation}
\label{EQStar}
  - 4\int_{\R^n} (\Delta^2 u)(\nabla_i\Delta u)(\nabla_i\eta)
\le \varepsilon E^2_\eta(u) + \frac{c(\varepsilon,n,s)}{\rho^{4}}\int_{\R^n} |\Delta u|^2\gamma^{s-4}\,,
\end{equation}
and
\begin{equation}
\label{EQStarstar}
  - 2\int_{\R^n} (\Delta u)(\Delta^2 u)(\Delta\eta)
\le \varepsilon E^2_\eta(u) + \frac{c(\varepsilon,n,s)}{\rho^{4}}\int_{\R^n} |\Delta u|^2\gamma^{s-4}\,,
\end{equation}
hold.
Given the above estimates, we may conclude the required statement for the case $k=1$ as follows. Choosing $\varepsilon = \frac12$ in each of \eqref{EQStar}, \eqref{EQStarstar} and combining with \eqref{EQzero} we find
\[
\frac{d}{dt}E^1_\eta(u) + 2E^2_\eta(u)
\le E^2_\eta(u) + \frac{c(n,s)}{\rho^{4}}\int_{\R^n} |\Delta u|^2\gamma^{s-4}\,,
\]
whereupon subtraction of $E^2_\eta(u)$ from both sides yields the desired estimate.

The estimate \eqref{EQStarstar} is \eqref{EQstarstar} with $\delta_1 = \varepsilon/2$ and $k=1$.
It remains to prove the estimate \eqref{EQStar}.
We compute
\begin{equation}
\label{EQstar1}
  - 4\int_{\R^n} (\Delta^2 u)(\nabla_i\Delta u)(\nabla_i\eta)
\le \delta_1 E^2_\eta(u) + \frac{4\cg^2s^2}{\delta_1\rho^2}\int_{\R^n} |\nabla\Delta u|^2\gamma^{s-2}\,.
\end{equation}
Now estimate
\begin{align*}
\int_{\R^n}& |\nabla\Delta  u|^2\gamma^{s-2}
\\
&=
-\int_{\R^n} (\Delta  u)(\Delta^2u)\gamma^{s-2}
-(s-2)\int_{\R^n} (\Delta  u)(\nabla_i\Delta u)(\nabla_i\gamma)\gamma^{s-3}
\\
&\le \rho^2\delta_2\int_{\R^n} |\Delta^2u|^2\gamma^{s}
+ \frac12 \int_{\R^n} |\nabla\Delta  u|^2\gamma^{s-2}\\
&\qquad 
\frac{1}{2\rho^2}\Big(\frac{1}{2\de_2} +  \cg^2(s-2)^2\Big)\int_{\R^n} |\Delta  u|^2\gamma^{s-4}\,.
\end{align*}
Absorbing the second term on the right into the left we obtain
\begin{align}
\notag
\int_{\R^n} &|\nabla\Delta  u|^2\gamma^{s-2}
\\
&\le
2\rho^2\delta_2\int_{\R^n} |\Delta^2u|^2\gamma^{s}
+ \frac1{\rho^2}\Big(\frac1{2\delta_2} + \cg^2(s-2)^2\Big)
  \int_{\R^n} |\Delta  u|^2\gamma^{s-4}\,.
\label{EQstar2}
\end{align}
Combining \eqref{EQstar1} with \eqref{EQstar2} we find
\begin{align*}
  - 4&\int_{\R^n} (\Delta^2 u)(\nabla_i\Delta u)(\nabla_i\eta)
\\
&\le \Big(\delta_1 + 
2\rho^2\delta_2\frac{4\cg^2s^2}{\delta_1\rho^2}\Big)
    E^2_\eta(u)
 + \frac{4\cg^2s^2}{\delta_1\rho^2}\bigg(\frac1{\rho^2}\Big(\frac1{2\delta_2} + \cg^2(s-2)^2\Big)\bigg)  \int_{\R^n} |\Delta  u|^2\gamma^{s-4}\,.
\end{align*}
Choosing $\delta_1>0$ and $\delta_2>0$ such that $\Big(\delta_1 + 2\delta_2\frac{4\cg^2s^2}{\delta_1}\Big) \le \varepsilon$ yields \eqref{EQStar}.

Let us now continue by considering the case where $k\ge2$.
In this case we have $k-1 \in \N$, and $s >4k$ implies $s-4 >4k-4 = 4(k-1)$.
Using these facts and
Corollary \ref{LMinterp2}, we see that 
\begin{equation}
\frac{1}{\rho^{8}}\int_{\R^n} |\Delta^{k-1}u|^2\gamma^{s-8}
 \le \frac{1}{\rho^4} \int_{\R^n} |\Delta^{k}u|^2\gamma^{s-4}
   + \frac{c(s,n)}{\rho^{4k}}\int_{\R^n} |\Delta u|^2\gamma^{s-4k}
\label{EQcy2e1}
\,.
\end{equation}
and, using Corollary \ref{LMinterp2} again,
\begin{equation}
\frac{1}{\rho^4} \int_{\R^n} |\Delta^{k}u|^2\gamma^{s-4}
 \le \delta_1 \int_{\R^n} |\Delta^{k+1}u|^2\gamma^{s}
   + \frac{c(\delta_1,s,n)}{\rho^{4k}}\int_{\R^n} |\Delta u|^2\gamma^{s-4k}
\label{EQcy2e2}
\,.
\end{equation}
Combining \eqref{EQcy2e1} with \eqref{EQcy2e2} and then choosing 
$\delta_1 = \frac{1}{2c_3}$, where $c_3(n,s)$ is as in the previous Corollary, yields
\begin{equation}
\frac{c_3}{\rho^{8}}\int_{\R^n} |\Delta^{k-1}u|^2\gamma^{s-8}
 \le \frac12 \int_{\R^n} |\Delta^{k+1}u|^2\gamma^{s-4}
   + \frac{\ti c}{\rho^{4k}}\int_{\R^n} |\Delta u|^2\gamma^{s-4k}
\label{EQcy2e3}
\, ,
\end{equation}
for some $\ti c = \ti c(n,s)$. Using \eqref{EQcy2e3} to estimate the right hand side of Corollary \ref{CY1} we find
\begin{align*}
\frac{d}{dt}E^k_\eta(u) + \frac32E^{k+1}_\eta(u)
 &\le
      \frac{c_3}{\rho^{8}}\int_{\R^n} |\Delta^{k-1}u|^2\gamma^{s-8}
\\&\le
     \frac12 \int_{\R^n} |\Delta^{k+1}u|^2\gamma^{s-4}
   + \frac{\ti c}{\rho^{4k}}\int_{\R^n} |\Delta u|^2\gamma^{s-4k}
\,,
\intertext{which, after absorbing the first term on the right into the left, becomes}
\frac{d}{dt}E^k_\eta(u) &+ E^{k+1}_\eta(u)
\le
     \frac{c_4}{\rho^{4k}}\int_{\R^n} |\Delta u|^2\gamma^{s-4k}
\end{align*}
as required.

\end{proof}


\section{Uniqueness}
\label{STuniqueness}

In this section we prove that smooth solutions to \eqref{bi} which satisfy
$|\lap u|^2(\cdot,t) \leq \frac{k_0}{t}$ are uniquely determined by their
initial values. 
\begin{theorem}\label{uthm}
Let $v:\R^n \times [0,T] \to \R$, $T < \infty$  be a smooth solution to \eqref{bi}
which satisfies
\begin{align} 
 | \lap v |^2(x,t)  \leq \frac{k_0}{t} \label{introA1u}
\end{align} 
for some $k_0 \in \R$, for all $t \in [0,T]$ and all $x \in \R^n$ and
\begin{align} 
v_0 \equiv 0.
\end{align}
Then $ v \equiv 0$.
\end{theorem}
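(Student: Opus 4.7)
The plan is to reduce the uniqueness statement to the already-established local estimate of Theorem \ref{main1} by rescaling and translating, then letting the scale tend to infinity. The crucial observation is that $v_0\equiv 0$ makes the initial-data constant $k_1$ in \eqref{assum2} harmless. Fix any $(x_0,t_0)\in\R^n\times(0,T]$ and any $R>0$, and define
\[
\ti v(\ti x,\ti t) := v(x_0 + R\ti x,\, R^4 \ti t).
\]
Then $\ti v$ is a smooth solution of \eqref{bi} on $\R^n\times[0,T/R^4]$. By the scale invariance of $b$ explained in Section \ref{STblowup}, the hypothesis $|\lap v|^2\le k_0/t$ transfers to $|\lap \ti v|^2(\ti x,\ti t)\le k_0/\ti t$, so \eqref{assum1} holds for $\ti v$ with the same constant $k_0$. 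Since $\ti v_0\equiv 0$, \eqref{assum2} holds for $\ti v$ with any positive constant, say $k_1=1$.

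Theorem \ref{main1} applied to $\ti v$ now produces a constant $N=N(n,k_0,1)$, independent of $R$, such that
\[
|\lap \ti v|^2(\ti x,\ti t) \le \frac{N}{(1-|\ti x|)^4}
\]
whenever $\ti x\in\overline{B_1(0)}$, $(1-|\ti x|)^4\ge N\ti t$, and $\ti t\le 1/N$. Choosing $R$ so that $R^4\ge Nt_0$, these conditions all hold at $\ti x=0$, $\ti t=t_0/R^4$. Unwinding the scaling,
\[
|\lap v|^2(x_0,t_0) \;=\; \frac{1}{R^4}\,|\lap \ti v|^2\!\Bigl(0,\frac{t_0}{R^4}\Bigr) \;\le\; \frac{N}{R^4}.
\]
Letting $R\to\infty$ forces $\lap v(x_0,t_0)=0$, and since $(x_0,t_0)$ was arbitrary, $\lap v\equiv 0$ on $\R^n\times(0,T]$.

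From $\lap v\equiv 0$ it follows that $\partt v=-\lap^2 v=0$, so $v(\cdot,t)$ is independent of $t$; by continuity of $v$ down to $t=0$ together with $v_0\equiv 0$, we conclude $v\equiv 0$ on $\R^n\times[0,T]$. There is no real obstacle here, since the quantitative work was already carried out in Theorem \ref{main1}; the only point to verify is that the constant $N$ in the rescaled estimate does not depend on $R$, so the factor $R^{-4}$ arising from unwinding the scaling of $|\lap v|^2$ actually yields a vanishing limit. This is exactly where the vanishing of $k_1$ (which in turn comes from $v_0\equiv 0$) is essential: without a $k_1$-independent bound, the limit $R\to\infty$ could not be taken.
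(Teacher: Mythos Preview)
Your argument is correct and is in fact cleaner than the paper's own proof of Theorem \ref{uthm}. The paper proceeds differently: it applies Theorem \ref{main1} only after \emph{translation} (not rescaling) to obtain a uniform bound $|\lap v|^2\le N(n,k_0)$ on $\R^n\times[0,1/N]$, and then re-runs the energy machinery of Corollary \ref{CY2} with $\rho\to\infty$ to force $\int_{\R^n}|\lap^p v|^2=0$, iterates down through lower-order Laplacians, concludes $v\equiv 0$ on $[0,1/N]$, and finally restarts the argument to cover all of $[0,T]$.

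Your route exploits the full scale invariance that is already built into Theorem \ref{main1}: because $\ti v_0\equiv 0$ for every $R$, the constant $k_1$ (and hence $N$) is genuinely independent of $R$, so the bound $|\lap v|^2(x_0,t_0)\le N/R^4$ can be sent to zero. This bypasses the second pass through the energy estimates and the time-stepping, and it reaches $\lap v\equiv 0$ directly for \emph{all} $t\in(0,T]$ in one stroke. The paper's approach has the minor advantage of making explicit that one already controls the higher energies $E^p_\eta$ (useful if one wanted quantitative decay), but for the bare uniqueness statement your argument is shorter and equally rigorous.
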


\begin{proof}
Since  $$ \sup_{B_1(0)} \sum_{i=0}^p |\grad^i v_0|^2 = 0 $$ for any
$p>0$, (c.f. (A2)), Theorem \ref{main1} tells us that $|\lap v|^2 (0,t) \leq  2N(n,k_0)$ for some
$N = N(n,k_0) \in \R$ for all $t \leq 1/ N$.
By setting $\ti v(\cdot,t) = v( \cdot -x_0,t)$ and using Theorem
\ref{main1} for $\ti v$, we see that $|\lap v|^2 (x_0,t) \leq N(n,k_0)$ 
for all $t \leq 1/ N$, for all $x_0 \in \R^n$.
Corollary \ref{CY2} implies that 
\begin{align}
 \partt E^p_{\eta}(v) + E^{p+1}_{\eta} (v) \leq 2CN
 \omega_n\rho^{n-4p} = 2CN
 \omega_n\rho^{-3n-4} 
\end{align}
where $p$ is now fixed and chosen to be $p(n) = n + 1$, and $\eta $ is
a non-negative cutoff function with $\eta = 1$ on $B_{\rho}(0)$, and
$C = C(n)$. To see this
repeat the argument from the inequality \eqref{step1} up to \eqref{remember}
but use this $v$ (instead of the $v$ appearing there) and use the fact that
$k_0 = 0$, and $|\lap v|^2 \leq N$ for all $ t\leq 1/N$ for this $v$. 
This implies that $E^p_{\eta}(v)(t) \leq 2CN\omega_n\rho^{-3n-4}$ for all $t
\leq 1/N \leq 1$, since
$E^p_{\eta}(v) $ is non-negative, and $E^p_{\eta}(v)( 0) = 0$.
Letting $\rho \to \infty$, we see that $\int_{\R^n} |\lap ^p v|^2 =0$ for all $t
\leq 1/N$.
Similarly $\int_{\R^n} |\lap ^{p-1} v|^2 = 0$ for all $t
\leq 1/N$.
Now use 
 \begin{align*}
\partt  \int_{B_1(0)} |\lap^{p-2} v|^2 & = 
2 \int_{B_1(0)} \lap^{p} v \lap^{p-2} v \cr
& \leq \int_{B_1(0)} |\lap^{p} v|^2 +  |\lap^{p-2} v|^2 
  = \int_{B_1(0)} |\lap^{p-2} v|^2 
\end{align*}
which tells us, after integrating, that $ \int_{B_1(0)}
|\lap^{p-2} v|^2 = 0 $  for all $t \leq 1/N$. Differentiating
$\int_{B_1(0)} |\lap^{p-3} v|^2$ w.r.t. time
and using  $\int_{B_1(0)}| \lap^{p-1} v|^2 = 0$ we obtain, using the same
argument, that  $ \int_{B_1(0)}
|\lap^{p-3} v|^2 = 0 $  for all $t \leq 1/N$. Continuing in this way,
we find that $\int_{B_1(0)}
|\lap^{l} v|^2 = 0$ for all $0 \leq l \leq p$, for all $t \leq 1/N$.
Similarly, we obtain $\int_{B_1(x_0)}
|\lap^{l} v|^2 = 0$ for all $0\leq l \leq p$, for all $t \leq 1/N$ for
all $x_0 \in\R^n$. In
particular, by choosing $l =0$, we see that $ v(\cdot,t) =0$ for all $t \leq
1/N$, $t \leq T$.
Repeating this argument for the function $\ti v(\cdot,\ti t) =
v(\cdot,\ti t + 1/N)$, we see that $v(\cdot,t) =0$ for all $t \leq T$, as required.
\end{proof}                                                                                                                                                                                                                             
\begin{corollary}\label{uniqueness}
Let $u, v:\R^n \times [0,T] \to \R$, $T < \infty$  be smooth solutions to \eqref{bi}
which satisfy
\begin{align} 
 |\lap v|^2(x,t) + |\lap u|^2(x,t) \leq \frac{k_0}{ t} \label{uniqueA1}
\end{align} 
for all $t \in [0,T]$ and all $x \in \R^n$ and
\begin{align} 
u_0(\cdot) = v_0(\cdot).
\end{align}
Then  $u \equiv v$.
\end{corollary}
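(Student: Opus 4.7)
The plan is to reduce the statement directly to Theorem \ref{uthm} by exploiting the linearity of \eqref{bi}. Set $w := u - v$. Since \eqref{bi} is linear and both $u$ and $v$ are smooth solutions, $w:\R^n\times[0,T]\to\R$ is also a smooth solution to $\partt w = -\lap^2 w$. The matching initial data gives $w_0 = u_0 - v_0 \equiv 0$.

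Next, I will verify that $w$ satisfies the growth hypothesis required by Theorem \ref{uthm}. Using the elementary inequality $|a+b|^2 \le 2|a|^2 + 2|b|^2$ pointwise, together with the assumption \eqref{uniqueA1}, one obtains
\begin{align*}
|\lap w|^2(x,t) \;=\; |\lap u - \lap v|^2(x,t) \;\le\; 2\,|\lap u|^2(x,t) + 2\,|\lap v|^2(x,t) \;\le\; \frac{2k_0}{t}
\end{align*}
for all $x\in\R^n$ and $t\in(0,T]$. Hence $w$ fulfils the hypothesis of Theorem \ref{uthm} with the constant $2k_0$ in place of $k_0$.

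Applying Theorem \ref{uthm} to $w$ yields $w \equiv 0$, that is $u \equiv v$. There is no real obstacle here — linearity of the biharmonic heat operator does all the work, and the quadratic growth bound is preserved up to a factor of two under taking differences. The entire content of the corollary lies in Theorem \ref{uthm}, which in turn rests on the local estimates of Section \ref{STblowup} and the weighted energy machinery of Section \ref{STaprioriestimates}.
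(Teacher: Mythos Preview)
Your proof is correct and follows exactly the same approach as the paper: set the difference $w=u-v$, observe it is a smooth solution of \eqref{bi} with zero initial data satisfying the growth bound (with constant $2k_0$), and invoke Theorem \ref{uthm}. The paper's own proof is in fact terser and simply asserts that the difference satisfies the hypotheses of Theorem \ref{uthm}; your explicit verification via $|a+b|^2\le 2|a|^2+2|b|^2$ is a welcome clarification.
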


\begin{proof} 
Set $s = u-v$. Then $s$ satisfies the conditions of Theorem \ref{uthm}. Hence
$s \equiv 0$ as required.
\end{proof}


\section{A Tychonoff-type solution and Non-uniqueness}\label{Tych}

In this section we describe a simple modification to the classical Tychonoff
counterexample, see \cite{Tych}, which establishes non-uniquness for complete solutions of the
polyharmonic heat equation.
We follow the construction given in \cite[Chapter 7, Section 1 (a), pp 211--213]{FritzJohn}.

Let $k\in\N$ and consider a solution $u:\R^n\times[0,T]\rightarrow\R$ to
\begin{align}
(\partial_t-\Delta^k)u &= 0\quad\qquad\text{on}\quad\R^n\times[0,T]\,,
\label{EQtik1}
\\
u(\cdot,0) &= u_0(\cdot)\,\ \quad\text{on}\quad\R^n\,.
\label{EQtik2}
\end{align}
We shall construct infinitely many solutions to
\eqref{EQtik1}-\eqref{EQtik2} which have zero as their initial
data.

For functions $g_j:[0,T]\rightarrow\R$ to be chosen, set
\[
u(x,t) = \sum_{j=0}^\infty g_j(t)x_1^{2jk}\,.
\]
The convergence of this series will be guaranteed by our choice of $g_j$, and verified later.
Differentiating formally, we find
\begin{align*}
\sum_{j=0}^\infty (\partial_tg_j)(t)x_1^{2jk}
 &= \partial_tu(x,t)
  = \Delta^ku(x,t)
\\
 &= \sum_{j=1}^\infty (2jk)(2jk-1)\cdots(2jk-2k+1)g_j(t)x_1^{2jk-2k}
\\
 &= \sum_{j=0}^\infty (2jk+2k)(2jk+2k-1)\cdots(2jk+1)g_{j+1}(t)x_1^{2jk}\,
\end{align*}
for all $j \in \N_0$.
We are thus led to the recurrence relation
\begin{equation}
\label{EQtik3}
(\partial_tg_j) = (2jk+2k)(2jk+2k-1)\cdots(2jk+1)g_{j+1}\,
\end{equation}
for all $j \in \N_0$.
We set $g_j(t) = \lambda(j,k)g_0^{(j)}(t)$, where $(g_0)^j$ refers to
$j$ temporal derivatives of $g_0$, and  $\lambda(j,k)$ is a constant to be determined depending only on
$j,k$. Using this choice of $g_j$, we see that \eqref{EQtik3} is
satisfied, provided that
\begin{equation*}
g_0^{(j+1)}\lambda(j,k) = (2jk+2k)(2jk+2k-1)\cdots(2jk+1)\lambda(j+1,k)g_0^{(j+1)}\,,
\end{equation*}
which for $g_0^{(j+1)}\ne0$ is equivalent to
\begin{equation*}
\frac{\lambda(j,k)}{\lambda(j+1,k)} = (2jk+2k)(2jk+2k-1)\cdots(2jk+1)\,.
\end{equation*}
Using $\lambda(0,k) = 1$, we see that this implies that $\lambda(j,k)
= \frac{1}{(2jk)!}$ for all $j \in \N_0$ (we use $0!:= 1$) .
Let us now set
\begin{equation*}
g_0(t) = \exp(-t^{-p})
\end{equation*}
for $t>0$ and $p>1$.

\begin{lemma}
\label{LMtikhonovestimate}
There is an absolute constant $\ep_0  >0$ and a $p >1$ such that the estimate
\[
\Big|g_0^{(j)}(t)\Big| \le \frac{j!2^j}{t^j}\exp\big(-\ep_0(2t)^{-p}\big)
\]
holds for all $t>0$.
\end{lemma}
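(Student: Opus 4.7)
The plan is to apply the Cauchy integral formula to the holomorphic extension of $g_0$ into the right half-plane. For $\zeta = |\zeta|e^{i\phi}$ with $|\phi|<\pi/2$, define $\zeta^{-p} := |\zeta|^{-p}e^{-ip\phi}$ via the principal branch; then $\zeta\mapsto \exp(-\zeta^{-p})$ is holomorphic on $\{\operatorname{Re}\zeta>0\}$ and agrees with $g_0$ on the positive real axis. For each $t>0$, Cauchy's formula applied on the circle $\Gamma_t := \{\zeta\in\C : |\zeta-t|=t/2\}\subset\{\operatorname{Re}\zeta>0\}$ yields
\[
g_0^{(j)}(t) = \frac{j!}{2\pi i}\oint_{\Gamma_t}\frac{g_0(\zeta)}{(\zeta-t)^{j+1}}\,d\zeta,
\]
so that the standard length-times-max bound produces
\[
\bigl|g_0^{(j)}(t)\bigr| \le \frac{j!}{(t/2)^j}\max_{\zeta\in\Gamma_t}|g_0(\zeta)| = \frac{j!\,2^j}{t^j}\max_{\zeta\in\Gamma_t}\exp\bigl(-\operatorname{Re}\zeta^{-p}\bigr).
\]

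Next I would bound $\operatorname{Re}\zeta^{-p}$ from below on $\Gamma_t$. The tangent-from-origin construction shows that every $\zeta\in\Gamma_t$ satisfies $|\arg\zeta|\le\arcsin(1/2) = \pi/6$, and trivially $|\zeta|\le t + t/2 = 3t/2$. Writing $\zeta^{-p} = |\zeta|^{-p}e^{-ip\arg\zeta}$ and combining these two facts gives
\[
\operatorname{Re}\zeta^{-p} = |\zeta|^{-p}\cos(p\arg\zeta) \ge \cos(p\pi/6)\,(3t/2)^{-p},
\]
which is a useful estimate precisely when $p\pi/6 < \pi/2$, that is, when $p\in(1,3)$.

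To conclude, fix any such $p$, e.g.\ $p=2$, so that $\cos(p\pi/6)>0$. Using the identity $(3t/2)^{-p} = (4/3)^p(2t)^{-p}$ and setting
\[
\ep_0 := (4/3)^p\cos(p\pi/6) > 0,
\]
which is a positive absolute constant once $p$ is fixed, the previous two displays combine to give exactly $|g_0^{(j)}(t)| \le (j!\,2^j/t^j)\exp(-\ep_0(2t)^{-p})$, as required. The only real constraint is the joint requirement that the contour $\Gamma_t$ stays inside the holomorphy domain and that $\operatorname{Re}\zeta^{-p}$ remains positive on it; both are handled simultaneously by restricting to $p\in(1,3)$, which is why the lemma asserts the existence of such a $p$ rather than allowing arbitrary $p>1$. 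No step beyond these elementary complex-analytic estimates is needed.
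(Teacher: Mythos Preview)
Your proof is correct and follows essentially the same approach as the paper: Cauchy's integral formula on the circle of radius $t/2$ about $t$, followed by the bound $\operatorname{Re}\zeta^{-p}=|\zeta|^{-p}\cos(p\arg\zeta)$ with $|\arg\zeta|\le\pi/6$. The only cosmetic differences are that you fix the radius $t/2$ from the outset and use the sharp bound $|\zeta|\le 3t/2$ (the paper uses the looser $r\le 2t$), and you make the constraint $p\in(1,3)$ explicit, yielding a slightly larger admissible $\ep_0$; neither changes the argument.
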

\begin{proof}
Consider the function $h(z)=\exp(-z^{-p})$ for $p>1$.
Since $z^p := \exp(p\Log(z))$ is analytic on $\C\backslash(-\infty,0]$,
$h$ is analytic on $\C\backslash(-\infty,0]$: for polar coordinates $z
= re^{i\theta}$ with $\theta \in (-\pi, \pi)$ we are using $\Log(z):=
\log(r) + i \theta$, and hence $z^p = r^pe^{ip\theta}$.
For $0<\rho<t$, Cauchy's integral formula on $S_\rho(t+0i) = S_\rho(t)$, the circle in $\C$ with radius $\rho$
centred at $t+0i$, gives
\[
g_0^{(j)}(t)
 = h^{(j)}(t+0i)
 = \frac{j!}{2\pi i} \int_{S_\rho(t)} \frac{h(z)}{(z-t)^{j+1}}dz\,.
\]
This gives the estimate
\begin{equation}
\label{EQtik4}
|g_0^{(j)}(t)|
 \le \frac{j!}{\rho^j}\sup_{z\in S_\rho(t)} |h(z)|\,.
\end{equation}
In polar coordinates $z = r\exp(i\theta)$, $\theta \in (-\pi,\pi)$ we have
\[
h(z) = \exp\big(-r^{-p}e^{-ip\theta}\big)
     = \exp\big(-r^{-p}(\cos(p\theta) - i\sin(p\theta))\big)\,.
\]
Therefore
\begin{equation}
\label{EQrev0}
|h(z)| \le \exp(-r^{-p}\cos p\theta)\,.
\end{equation}
For $z\in S_\rho(t)$, we have
\[
-\frac{\pi}{2} < -\theta_0
 \le \theta
 \le \tan^{-1}( \rho/\sqrt{t^2-\rho^2} )
 =: \theta_0 < \frac{\pi}{2}
\]
Note that $\theta_0$ doesn't depend on $p$.
So we may choose $p>1$ such that $p\theta_0 < \frac{\pi}{2}$: this is
possible since $\theta_0 < \frac{\pi}{2}$.
We then have $\cos (p\theta) \ge \cos(p\theta_0) =: \ep_0 > 0$ for all
$\theta \in (-\theta_0, \theta_0)$.
Since $r\le 2t$ we may estimate
\[
-r^{-p} \cos p\theta \le -\ep_0 (2t)^{-p}
\]
for all $\theta \in (-\theta_0, \theta_0)$, 
which combined with our earlier estimate \eqref{EQrev0} yields
\[
\sup_{z\in S_\rho(t)} |h(z)|
  \le \exp\big(-\ep_0(2t)^{-p}\big)\,.
\]
Inserting this into the estimate \eqref{EQtik4} and choosing  $\rho = t/2$ finishes the proof.
\end{proof}

Lemma \ref{LMtikhonovestimate} implies
\begin{align*}
|u(x,t)|
 &\leq \sum_{j=0}^\infty |g_j(t)||x_1|^{2jk}
\\
 &=  \sum_{j=0}^\infty \frac{|g_0^{(j)}(t)|}{(2jk)!} |x_1|^{2jk}
\\
 &\le \sum_{j=0}^\infty \frac{j!2^j}{t^j(2jk)!}|x_1|^{2jk}\exp\big(-\ep_0(2t)^{-p})
\\
 &\le \exp\big(-\ep_0(2t)^{-p}\big)
      \sum_{j=0}^\infty \frac{j!}{(2jk)!}\Big(\frac{|x_1|^{2k}}{t/2}\Big)^{j} 
\\
&\le \exp\big(-\ep_0(2t)^{-p} \big)
      \sum_{j=0}^\infty \frac{1}{j!}\Big(\frac{|x_1|^{2k}}{t/2}\Big)^{j}
\\
 &= \exp\Big(-\frac{\ep_0}{(2t)^p} + \frac{|x_1|^{2k}}{t/2}\Big)\,.
\end{align*}
Here we have used $\frac{j!}{(2j)!} \leq \frac{1}{j!}$ for all $j \in
\N_0$ which may be
seen using induction.
Therefore $u$ is well-defined for every $t>0$.  Moreover, $p>1$ implies that the first term above always
dominates for small $t$ and so $u$ converges uniformly to zero on compact subsets of $\R^n$ as $t\searrow0$.
More precisely, let $K$ be a compact subset of $\R^n$ with diameter
$d$ and $0 \in K$. Then $|x| \le d$ and for $x\in K$
\[
\lim_{t\searrow0} |u|(x,t)
\le
  \lim_{t\searrow0}  \exp\Big(-\frac{\ep_0}{(2t)^p} + \frac{d^{2k}}{t/2}\Big)
= 0\,.
\]
A similar argument shows that all derivatives of $u$ exist and  converge uniformly to
zero on compact subsets of $\R^n$ as $t\searrow0$.
We explain this in the following.
Assuming $x = x_1$ satisfies $|x| \leq d$  where $d \geq 1$ and taking $s$ spatial derivatives formally we find
\begin{align*}
\left|(\partial_{x})^su(x,t)\right| & = \left| \sum_{j \geq s/(2k)}
(g_0)^j(t)(x)^{2jk-s} \frac{(2jk)(2jk-1) \ldots (2jk -s +1)} {(2jk)!} \right|
\\
& \leq \sum_{j \geq s/(2k)}
\left|(g_0)^j(t)(x)^{2jk-s} \right| \left| \frac{(2jk)(2jk-1) \ldots (2jk -s +1)} {(2jk)!} \right|
\\
& \leq \sum_{j \geq s/(2k)} |g_0^j|(t)|d|^{2jk-s} \frac{ 1}{(2jk-s)!} \\
& \leq \sum_{j \geq s/(2k)} |g_0^j|(t)(|d|^{2k} )^j  \frac{ 1}{(2jk-s)!} \\
& \leq  \exp(  -\ep_0(2t)^{-p})  \sum_{j \geq s/(2k)}  (
2d^{2k}/t)^j\frac{j!} {(2jk-s)!} \\
\displaybreak[4]
& \leq \exp(  -\ep_0(2t)^{-p})  \sum_{   \{ j \ | \ 2kj \geq s \}   }  (
2d^{2k}/t)^j\frac{(kj)!} {(2jk-s)!} \\
&\leq s! \exp(  -\ep_0(2t)^{-p})  \sum_{ \{ j \ | \ 2kj \geq s \} } \frac{(
  2d^{2k}/t)^j}{kj!}\\
& \leq s! \exp(  -\ep_0(2t)^{-p})  \sum_{ \{ j \ | \ 2kj \geq s \} } \frac{(
  2d^{2k}/t)^j}{j!}\\
&\leq s! \exp(  -\ep_0(2t)^{-p})  \exp( 2d^{2k}/t),
\end{align*}
which goes to zero as $t \downto 0$.
Here we used that $ \frac{(r!)^2}{(2r-s)!} \leq s!$ for all $r \geq
s$,$r,s \in \N_0$, which may be verified using induction on $r$. 
Since $s$ time derivatives of $u$ are formally given by $2ks$ spatial
derivatives of $u$, we see that all mixed derivatives (space and time) of $u$
exist for $t>0$ and converge uniformly on (spatial) compact sets $K \subset
\R^n$ to $0$.

By extending $u$ to be zero for all $t\le0$ we have a solution $u\in
C^\infty( \R^n \times(-\infty,\infty))$ to \eqref{EQtik1}-\eqref{EQtik2} which is
non-zero for $t>0$ and satisfies $u \equiv 0$ for all $t\le 0$.


\section{An example}\label{STexample}
Let $u_0 :\R^n \to \R$ be given by $u_0(x_1,x_2, \ldots,x_n):= 1$ if $x_1
>0$,  $u_0(x_1,x_2, \ldots,x_n):= 0$ if $x_1 \leq0$.
Setting $u(x,t):= \int_{\R^n} u_0(x-y) b(y,t) dy$,
with $b: \R^n \times (0,\infty) \to \R$  the bi-harmonic heat
kernel on $\R^n $, we see that the function $u: \R^n \times
(0,T) \to \R$ is smooth and solves 
$ \partt u(x,t) = -\lap^2u(x,t)$ for all $ t>0$ for all $x \in \R^n$,
and that $u(\cdot,t) \to u_0(\cdot)$ uniformly on any compact set  $K$
contained in $\R^n \backslash \{ x \in \R^n | x_1 = 0\}$. 
Furthermore, there  exists a $ k_0
 >0$ such that 
for all $s>0$ there exists a $x_s \in \R^n$ such that
$|\lap u|^2( x_s,s) = \frac{k_0}{s}$. 
The biharmonic heat kernel $b$ is given by 
$$b(x,t) = (2\pi)^{-n/2}
t^{-n/4}\int_{\R^n}e^{   i\langle w,x\rangle t^{-1/4} -|w|^4} dw.
$$

We verify of all these facts below.

We have (see the Appendix of  \cite{KL}, and the papers
\cite{GP}, \cite{GG1},\cite{GG2} for further, related and similar estimates) 
\begin{align}
 \left|\left(\partt\right)^l \left(\grad^k\right) b(x,t)\right| \leq C(k,l,m)  (t^{-p(k,l) + m/4} +
 t^{ (m-1)/4}) |x|^{-m}  \label{HKest}
\end{align}
for all $l,k \in \N_0, m \in \N_0$, for some $p(k,l) \in \N$  for all $x
\in \R^n$ for all $t >0 $.
This can be seen as follows.
Let $f: \R^n \to \R$ be $f(y):=  (2 \pi)^{-n/2} \int_{\R^n}e^{   -i \langle
  w, y\rangle-|w|^4} dw,$ 
so that $b(x,t) = t^{-n/4}f( -\frac{x}{t^{1/4}})$. Then $f$ is the Fourier transform of the function $l: \R^n \to \R$,
$l(w):= e^{-|w|^4}$ which is in $\curlS$, the so called Schwartz Space (see \cite{SW} Chapter
I.3 where this set of functions is defined and called the space of {\it testing
  functions}). Hence $f$ itself is in $\curlS$ (see \cite{SW}, Theorem 3.2
Chapter I.3), in particular $|\grad_{\al} f|(x) \leq \frac{c(|\al|,m) }{|x|^m }
$ for any $m \in \N_0$ and any multi-index $\al = (\al_1, \ldots,
\al_n)$: $\al_i \in \N_0 $ for all $i = 1, \ldots, n$, $| \al | := \al_1 + \al_2 + \ldots \al_n$, and
we have used the notation $\grad_{\al} f := \grad_{\al_1} \grad_{\al_2} \ldots
\grad_{\al_n} f.$

Using the represenation $b(x,t) =  t^{-n/4}f( -\frac{x}{t^{1/4}})$ and the
fact that $f$ is in $\curlS$  we get
 \begin{align*}
\left|\left(\partt\right)^l \left(\grad^k\right) b(x,t)\right|  
&\leq (t^{-p(k,l)} + t^{-1/4}) 
\sum_{0 \leq |\al|  \leq k+l} |\grad_{\al} f|\left(-\frac{x}{t^{1/4}}\right) \cr
& \leq (t^{-p(k,l)} + t^{-1/4}) \frac{c(k,l,m) }{ |x/t^{1/4}|^m} \cr
& = c(k,l,m) \frac{t^{-p(k,l) + m/4} +t^{(m-1)/4 }   }{ |x|^m} 
\end{align*}
which proves the estimate \eqref{HKest} since $m \in \N_0$ was arbitrary.
This shows that the function $u(x,t):= \int_{\R^n} u_0(x-y) b(y,t) dy =\int_{\R^n} u_0(z) b(x-z,t) dz
$ is well defined for any measurable  $L^{\infty}$ function $u_0: \R^n
\to \R$ for all $t>0$, and is differentiable in time and space for all
$t>0$ for all $x \in \R^n$ and the derivative is given by
differentiating under the integral sign (in view of the Lebesgue
dominated convergence Theorem):
\begin{align*}
\left(\partt\right)^l \left(\grad^k\right) u(x,t)  =\int_{\R^n} u_0(z) \left(\left(\partt\right)^l \left(\grad^k\right) b\right)(x-z,t) dz.
\end{align*}
Using the fact that $\partt b = -\lap^2 b$ (see below for an explanation),  we get
$u: \R^n \times (0,\infty) \to \R$ is smooth and satisfies
{$ \partt u = -\lap^2 u$.
Notice also that $\int_{\R^n} b(x,t) dx =  \int_{\R^n}  t^{-n/4} f(
-\frac{x}{t^{1/4}}) dx = \int_{\R^n} f(-z) dz = \int_{\R^n} b(z,1)
dz = 1$ (the last equality is explained below).
Hence, for $x \in B_{\ep}(z)$ where $ z = (z_1, \ldots, z_n)$ has $z_1
> 2\ep$, we have 
\begin{align}
 |u(x,t) - 1| & = \bigg|\int_{\R^n} b(x-y,t)(u_0(y) - 1) dy\bigg|
 \cr 
& =  \bigg| \int_{B_{\ep}(x)} b(x-y,t) (u_0(y) - 1) dy
\\&\quad +  \int_{\R^n \backslash B_{\ep}(x)} b(x-y,t) (u_0(y) - 1)  dy \bigg|\cr 
& = 0 + \bigg|\int_{\R^n \backslash B_{\ep}(x)} b(x-y,t) (u_0(y) - 1)  dy \bigg|\cr 
&  \leq \int_{\R^n \backslash B_{\ep}(x)} 2|b(x-y,t)| dy \cr 
&\leq \int_{\R^n \backslash B_{\ep}(x) } \frac{c(m,n)t^{m-n/4}}{|x-y|^{4m}} dy \cr 
&\leq  C(\ep,m,n)t^{m-n/4} \cr
&\leq C(\ep,m,n) t^2
\end{align}
for $m > 2 + n/4$ for $t\leq 1$ which goes to zero as $t \to 0$.
Similarly $|u(x,t)| \leq C(\ep,m,n)t^2$ goes to zero for all $x \in B_{\ep}(z)$ 
where $ z = (z_1, \ldots, z_n)$ has $z_1
< -2\ep$. Hence $u(\cdot,t) \to u_0$ uniformly on compact sets $K
\subseteq \R^n \backslash \{ x \in \R^n \ | \ x_1 = 0\}$.
The definition of $u_0$ and $u$ guarantees that $u(cx,c^4t) = u(x,t)$
for all $c,t>0$. We verify this now. Notice first that 
\begin{align}
b(cx,c^4t) & = (2\pi)^{-n/2}
(c^4t)^{-n/4}\int_{\R^n}e^{   i(c^4t)^{-1/4} \langle w,cx\rangle-|w|^4} dw\cr
& =c^{-n} (2\pi)^{-n/2}\int_{\R^n}e^{   i t^{-1/4} \langle
  w,x\rangle-|w|^4} dw
\end{align}
that is $b(cx,c^4t) = c^{-n}b(x,t) $ for all $ x \in \R^n$ for all $c>0$.
Also, the definition of $u_0$ guarantees that $u_0(cz) = u_0(z)$ for
all $z \in \R^n$ and all $c>0$.
Making a change of variable $y =  cw$ in the definition of $u$, and
then using $b(cw,c^4t) = c^{-n}b(w,t) $ and the property of $u_0$ just
mentioned, we calculate
\begin{align}
u(cx,c^4t):= \int_{\R^n} u_0(cx-y) b(y,c^4t) dy 
&= \int_{\R^n} u_0(cx-cw) b(cw,c^4t) c^n dw \cr
& = \int_{\R^n} u_0(x-w) b(w,t) dw \cr
& = u(x,t).
\end{align}
There must exist a point $(x_0,t_0) \in \R^n \times \R^+$ with
$\lap u(x_0,t_0) \neq 0$: if not, then $\partt u = -\lap^2 u = 0 $ for
all $t >0$, and hence $u(x,t) = u(x,s)$ for all $0<s<t$, and hence ,
using $u \to u_0$ on $\R^n \backslash \{ x \in \R^n | x_1 = 0\}$ as $t
\downto 0$
as explained above, we have $u(x,t) = 1$ on  $\R^n \cap \{ x \in
\R^n | x_1 > 0\}$, $u(x,t) = 0$ on  $\R^n \cap \{ x \in \R^n | x_1 <
0\}$ for all $t>0$, which contradicts the fact that $u(\cdot,t): \R^n
\to \R$ is smooth for $t>0$.

So there exists $(x_0,t_0) \in \R^n \times \R^+$ with $|\lap u(x_0,t_0)|^2 \neq
0$. Now $u(cx,c^4,t) = u(x,t)$ for all $t>0$, for all $x \in \R^n$ implies that
(take the Laplacian w.r.t $x$ of both sides) $c^2 (\lap u)(cx,c^4t) =  (\lap
u)(x,t)$ which implies $|\lap u|^2(cx,c^4t) = \frac{1}{c^4} |\lap u|^2(x,t)$.
In particular, choosing $t = t_0$, $c^4 = (s/t_0)$ and $x = x_0$ we find
\[
|\lap u|^2((s/t_0)^{1/4} x_0, s) = \frac{t_0}{s} |\lap
  u|^2(x_0,t_0)
\]
and hence 
$|\lap u|^2( x_s,s) = \frac{k_0}{s}$ where $k_0 = t_0 |\lap
  u(x_0,t_0)|^2 \neq 0$ and $x_s = (s/t_0)^{1/4} x_0$.
Using an almost identical argument, we see that for all $t>0$, there
must be points $y(t) \in \R^n$ such that 
$(\lap^2 u)( y(t),t) = \frac{k_1}{t}$ for some fixed $k_1 \in \R$, $k_1
\neq0 $.

The fact that $\partt b = -\lap^2 b$ can be seen as follows. Using
Theorem 1.7 of Chapter I.1 in \cite{SW}, we have
$-|x|^4  e^{-|x|^4 t} = \partt (e^{-|x|^4 t}) =(\partt \widehat b)(x,t) =
\widehat{(\partt b)}(x,t) = \widehat{( -\lap^2 b)}(x,t)$, and hence, taking 
the inverse of the Fourier transform, we get $(\partt b) = -\lap^2 b$ (note that $(\partt \widehat b)(x,t) =
\widehat{(\partt b)}(x,t) $ is true in view of the Lebesgue dominated
convergence Theorem and the estimates \eqref{HKest} and the inverse of
the Fourier transform exists in view of Corollary I.21 in I.1 of
\cite{SW} and the fact that $b$ is in $\curlS$).
The fact that $\int_{\R^n} b(z,1)
dz = 1$
may be seen by looking at how $b$ was derived: 
Let $ u_0: \R^n \to \R$ be a smooth function which is equal to $1$ on
$B_1(0)$ and has  compact support on $B_2(0)$.
Hence $u_0$ is in $\curlS$, and the Fourier transform
$\widehat{u_0} $ of $u_0$ is also in $\curlS$.
We only take the Fourier transform in the space direction in that
which follows.
Write $u(x,t) = (b(\cdot,t) * u_0)(x)$ so $\partt u = -\lap^2 u$ as
explained above, and
\begin{align}
\widehat u (x,t)  & = \widehat b (x,t) \widehat{u_0}(x)   
= e^{-t|x|^4} \widehat u_0(x) 
\end{align}
(see Theorem 1.4 in I.1 os \cite{SW}) and hence $\widehat u(\cdot,t) \to  \widehat{u_0}(\cdot)$
in the $L^2$ sense as $t \downto 0$.
But this implies $u(\cdot,t) \to u_0(\cdot)$ in the $L^2$ sense as $t \downto 0$,
in view of the fact that the
$L^2$ norm is preserved for the Fourier transform (and the inverse
Fourier transform) of a function in
$L^2\cap L^1$ (see Theorem 2.1 and Theorem 2.4 in Chapter I.2 of \cite{SW}).
In particular this shows $\int_{\R^n} b =1$:
If $1 \neq c_0 := \int_{\R^n} b \neq 0$, then  for $x \in B_{1/2}(0)$
we have 
\begin{align}
 |u(x,t) - 1/c_0| & = 
\bigg|\int_{\R^n} b(x-y,t)(u_0(y) - 1) dy\bigg|
 \cr 
& = \bigg|\int_{\R^n \backslash B_1(0)} b(x-y,t)(u_0(y) - 1) dy\bigg|
 \cr 
& = \bigg|\int_{\R^n \backslash B_1(0)} b(x-y,t)(u_0(y) - 1) dy\bigg|
 \cr 
& \leq C \int_{\R^n \backslash B_1(0)} |b(x-y,t)| dy\cr
& =  C \int_{\R^n \backslash B_1(x)} |b(z,t)| dz\cr
& \leq C \int_{\R^n \backslash B_{1/2}(0)} |b(z,t)| dz\cr
& \to 0
\end{align}
as $t \downto 0$ 
in view of \eqref{HKest},
which shows $u(\cdot,t)$ converges uniformly in the supremum norm to
$(1/c_0) \neq 1$  on $B_{1/2}(0)$ as $t \downto 0$,
which contradicts the fact that $u(\cdot,t)$ converges to $u_0$ in the
$L^2$ norm as $t \downto 0$.
Similarly, if $ \int_{\R^n} b  =  0 $, one shows $u(x,t) \to 0$  uniformly in the supremum norm
 on $B_{1/2}(0)$ as $t \downto 0$,  which contradicts the fact that $u(\cdot,t)$ converges to $u_0$ in the
$L^2$ norm as $t \downto 0$.

\section{Appendix}

We require a rather specific form of the standard interpolation inequalities (see for example \cite[Theorems
7.25--7.28]{GT}).  We have thus provided
proofs and precise statements of that which we need here in the appendix.

\begin{lemma}\label{L2est}
For any smooth function $\phi:B_1(0) \to \R$ we have
\begin{equation}
\int_{B_{1-s\si}} |\grad^{2s} \phi|^2 + |\grad^{2s-1} \phi|^2\leq c(s,\si) \int_{B_1} |\lap^s
\phi|^2 +  |\lap^{s-1}\phi|^2 +  \ldots + |\phi|^2  \label{inducts}
\end{equation}
\end{lemma}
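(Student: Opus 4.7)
I would prove the lemma by induction on $s$, exploiting the crucial commutativity $\lap(\grad^k\phi) = \grad^k(\lap\phi)$ valid on $\R^n$. The base case is a standard Caccioppoli-type argument using nested cutoffs; the inductive step is then essentially formal.

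\textbf{Base case $s=1$.} I would choose nested cutoffs $\eta_1, \eta_2 \in C_c^\infty(B_1)$ with $\eta_2 \equiv 1$ on $B_{1-\si/2}$, $\eta_1$ supported in $B_{1-\si/2}$ and $\eta_1 \equiv 1$ on $B_{1-\si}$, and with $|\grad\eta_i| \le c/\si$, $|\grad^2\eta_i| \le c/\si^2$. Two integrations by parts in $\int (\grad_i\grad_j\phi)(\grad_i\grad_j\phi)\eta_1^2$ — first in $\grad_i$ (producing $\grad_j\lap\phi$), then in $\grad_j$ — give
\[
\int |\grad^2\phi|^2\eta_1^2 \;=\; \int|\lap\phi|^2\eta_1^2 \;+\; 2\int(\lap\phi)(\grad_j\phi)\eta_1\grad_j\eta_1 \;-\; 2\int(\grad_j\phi)(\grad_i\grad_j\phi)\eta_1\grad_i\eta_1.
\]
Young's inequality then absorbs the $|\grad^2\phi|^2$ boundary term into the left and leaves $\int|\grad\phi|^2|\grad\eta_1|^2$, which is supported where $\eta_2 \equiv 1$ and hence is bounded by $\int|\grad\phi|^2\eta_2^2$. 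The separate Caccioppoli identity
\[
\int|\grad\phi|^2\eta_2^2 \;=\; -\int\phi\,\lap\phi\,\eta_2^2 \;-\; 2\int\phi\,\grad_i\phi\,\eta_2\grad_i\eta_2,
\]
combined with Young, controls this by $c(\si)\int_{B_1}(|\lap\phi|^2 + \phi^2)$, yielding the base case.

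\textbf{Inductive step.} Suppose the lemma holds for $s-1$. Applying the base case (by translation/scaling) to each scalar component of the tensor $\psi := \grad^{2s-2}\phi$ on the pair of balls $B_{1-s\si} \subset B_{1-(s-1)\si}$ gives
\[
\int_{B_{1-s\si}} |\grad^2\psi|^2 + |\grad\psi|^2 \;\leq\; c(\si)\int_{B_{1-(s-1)\si}}\bigl(|\lap\psi|^2 + |\psi|^2\bigr).
\]
By commutativity on $\R^n$, $|\grad^2\psi| = |\grad^{2s}\phi|$, $|\grad\psi| = |\grad^{2s-1}\phi|$, and $|\lap\psi| = |\grad^{2s-2}\lap\phi|$. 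Applying the induction hypothesis once to $\phi$ bounds $\int_{B_{1-(s-1)\si}}|\grad^{2s-2}\phi|^2$ by $c(s-1,\si)\int_{B_1}\sum_{k=0}^{s-1}|\lap^k\phi|^2$, and once to the smooth function $\lap\phi$ bounds $\int_{B_{1-(s-1)\si}}|\grad^{2s-2}\lap\phi|^2$ by $c(s-1,\si)\int_{B_1}\sum_{k=1}^{s}|\lap^k\phi|^2$. Adding the two gives the desired inequality for $s$, with an updated constant $c(s,\si)$.

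\textbf{Main obstacle.} The only genuinely delicate step is the nested-cutoff bookkeeping in the base case, where the error term $\int|\grad\phi|^2|\grad\eta_1|^2$ produced by the second integration by parts must be absorbed using a second Caccioppoli estimate on a slightly larger set — this is what forces a loss from $B_1$ to $B_{1-\si}$ rather than, say, $B_{1-\si/2}$. Once the base case is in hand, the inductive step runs cleanly because the induction hypothesis applies uniformly to every smooth function (in particular to $\lap\phi$), so no further cutoff juggling is required beyond the single shrinkage from $B_{1-(s-1)\si}$ to $B_{1-s\si}$.
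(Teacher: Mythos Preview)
Your proposal is correct and follows essentially the same approach as the paper: induction on $s$, with the inductive step applying the base case to (components of) $\nabla^{2s-2}\phi$ on the nested balls $B_{1-s\si}\subset B_{1-(s-1)\si}$, using $\lap\nabla_\alpha\phi=\nabla_\alpha\lap\phi$, and then invoking the induction hypothesis on both $\phi$ and $\lap\phi$. The only difference is that the paper simply cites the interior $L^2$-regularity estimate from Evans for the base case, whereas you spell out the Caccioppoli argument with nested cutoffs by hand; this is the same content.
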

for any $s \in \N$ and $1>\si>0$, as long as $1-s\si >0$. 
\begin{proof}
We show the inequality \eqref{inducts}  for arbitrary smooth $\phi:B_1(0) \to \R$  using induction.

Step 1: $L^2$-theory (see for example \cite[Theorem 1, Section 6.3.1]{Evans})
tells us that for an arbitrary smooth function $\phi:B_1 \to \R$
  \begin{equation}
\int_{B_{1-\si}} |\grad^{2} \phi|^2 + |\grad \phi|^2\leq c(\si)
\int_{B_1} |\lap \phi|^2 + |\phi|^2 
\end{equation}
 as required.

Inductive Step:
Let $\al = (\al_1, \ldots, \al_n)$ be a multi-index with 
$0\leq \al_i \leq 2s$ and $\sum_{i=1}^n \al_i = 2s$. 
We use the notation $\grad_{\al} \phi := \grad_{\al_1} \grad_{\al_2}
\ldots \grad_{\al_n} \phi.$
Assume that statement  \eqref{inducts} is true for some $s \in \N$.
Again, $L^2$ theory,  see for example \cite{Evans} Theorem
1, Section 6.3.1, tells us that
\begin{align*}
 \int_{B_{1-(s+1)\si}} &|\grad^{2} \grad_{\al} \phi|^2 + |\grad \grad_{\al} \phi|^2 
\\& \leq
a(s,\si) \int_{B_{1-s\si}} |\lap (\grad_{\al} \phi)|^2 + |\grad_{\al}
\phi|^2\cr
&= a(s,\si) \int_{B_{1-s\si}} |\grad_{\al} (\lap \phi)|^2 + |\grad_{\al}
\phi|^2\cr
& \leq a(s,\si) c(s,\si)  \int_{B_1} |\lap^s
(\lap \phi) |^2 +  |\lap^{s-1} ( \lap \phi)|^2 +  \ldots + |\lap
\phi|^2\cr 
& \ \ \ + a(s,\si) c(s,\si)  \int_{B_1} |\lap^s
\phi|^2 +  |\lap^{s-1}\phi|^2 +  \ldots + |\phi|^2 , \cr
& = \ti a (s,\si) \int_{B_1} |\lap^{s+1}
\phi|^2 +  |\lap^{s}\phi|^2 +  \ldots + |\phi|^2 
\end{align*}
where in the second last line (the  inequality) we have used the induction hypothesis
applied to the functions $\lap \phi$ and $\phi$.
By summing up over all possible $\al$ (the number of possible $\al$
is a constant depending on $n$ and $s$) we see that 
\begin{align*}
 \int_{B_{1-(s+1)\si}} |\grad^{2s +2} \phi|^2 + |\grad^{2s +1} \phi|^2
 & \leq c (s +1,\si) \int_{B_1} |\lap^{s+1}
\phi|^2 +  |\lap^{s}\phi|^2 +  \ldots + |\phi|^2 
\end{align*}
as required.

This completes the proof by induction.
\end{proof}

\begin{lemma}
Suppose $u\in C^\infty_{\text{loc}}(\R^n)$, $k\in\N$, $s>8$, and $\gamma$, $c_\gamma$ are as in \eqref{EQgamma}.
For any $\delta_0>0$ we have
\label{LMinterp1}
\begin{equation*}
\int_{\R^n} |\Delta^ku|^2\gamma^{s-4}
 \le \delta_0\rho^4 \int_{\R^n} |\Delta^{k+1}u|^2\gamma^s
   + \frac{c_{\delta_0}}{\rho^4}\int_{\R^n} |\Delta^{k-1}u|^2\gamma^{s-8}\,.
\end{equation*}
where $c_{\delta_0}$ is an absolute constant given by
\[
c_{\delta_0}
 = c_{\delta_0}(\delta_0,s,n)
 = 2^4(\delta_0^{-1} + 2^9s^4\cg^4)\,.
\]
\end{lemma}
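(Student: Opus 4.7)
The plan is to prove this as a weighted fourth-order interpolation inequality of Gagliardo--Nirenberg type, sandwiching $\Delta^k u$ between its neighbours $\Delta^{k-1}u$ and $\Delta^{k+1}u$.  The scaling is consistent: the missing four derivatives are restored by pairing $\Delta^{k+1}u$ with the weight $\rho^4$ and $\Delta^{k-1}u$ with the weight $\rho^{-4}$, and there are no boundary terms because $\gamma$ has compact support.

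For step one I would set $w := \Delta^{k-1}u$, so that $\Delta w=\Delta^k u$ and $\Delta^2w=\Delta^{k+1}u$, and integrate by parts twice to obtain
\begin{equation*}
\int_{\R^n}(\Delta w)^2\gamma^{s-4}
 = \int_{\R^n} w\,\Delta\!\bigl((\Delta w)\gamma^{s-4}\bigr).
\end{equation*}
Expanding $\Delta((\Delta w)\gamma^{s-4})$ by the Leibniz rule produces three terms: a main term $\int w(\Delta^2 w)\gamma^{s-4}$, a transport term $2\int w\,\nabla\Delta w\cdot\nabla\gamma^{s-4}$, and a potential term $\int w(\Delta w)\Delta\gamma^{s-4}$.

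In step two I would eliminate the awkward gradient on $\Delta w$ by one further integration by parts on the transport term, turning it into combinations of $(\Delta w)(\nabla w\cdot\nabla\gamma^{s-4})$ and $w(\Delta w)\Delta\gamma^{s-4}$. At this point every error term involves only $w$, $\nabla w$ and $\Delta w$ multiplied by factors of $\nabla\gamma$ or $\nabla^2\gamma$, which are controlled by $c_\gamma/\rho$ and $c_\gamma/\rho^2$ respectively, and by powers of $\gamma$ of order at most $s-6$. I would then apply the elementary inequality \eqref{EQcauchy} to each piece: the main term splits as $(w\gamma^{(s-8)/2})\cdot(\Delta^2w\,\gamma^{s/2})$, and using $\varepsilon\sim\delta_0\rho^4$ produces exactly the target contribution $\tfrac12\delta_0\rho^4\int(\Delta^2 w)^2\gamma^s + C(\delta_0)\rho^{-4}\int w^2\gamma^{s-8}$. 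For the error terms, the inequality $\gamma\le 1$ is used to upgrade any intermediate weight $\gamma^a$ (with $s-8\le a\le s-4$) to whichever of $\gamma^{s-8}$ or $\gamma^{s-4}$ is needed, and any residual $(\Delta w)^2\gamma^{s-4}$ pieces are absorbed back into the left-hand side.

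The one genuine obstacle is the auxiliary quantity $\int|\nabla w|^2\gamma^{s-6}$ that is produced by the error terms: it must be reduced further since it does not appear on the right-hand side of the claim. I would handle this by the identity $|\nabla w|^2 = \nabla\cdot(w\nabla w)-w\Delta w$, which after an integration by parts yields $\int|\nabla w|^2\gamma^{s-6} \le \epsilon\int(\Delta w)^2\gamma^{s-4} + C(\epsilon,s)\rho^{-2}\int w^2\gamma^{s-8}$, and the $(\Delta w)^2\gamma^{s-4}$ contribution is again absorbed on the left. The remaining task is bookkeeping: each successive Young application contributes a small constant, and these must be chosen so that the cumulative coefficient of $\int(\Delta^2w)^2\gamma^s$ is exactly $\delta_0$. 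Tracking these choices explicitly in $\delta_0$, $s$, and $c_\gamma$ produces the stated form $c_{\delta_0} = 2^4(\delta_0^{-1}+2^9s^4c_\gamma^4)$.
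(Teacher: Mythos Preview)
Your strategy coincides with the paper's: set $w=\Delta^{k-1}u$, integrate $\int(\Delta w)^2\gamma^{s-4}$ by parts twice to produce the main cross term $\int w(\Delta^2 w)\gamma^{s-4}$ plus errors carrying $\nabla\gamma$ and $\Delta\gamma$, split each piece by Young's inequality, and deal with the residual $\int|\nabla w|^2\gamma^{s-6}$ via one further integration by parts followed by absorption. The paper does exactly this, arriving at the same four-term expansion and the same auxiliary treatment.

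There is one slip to correct. Your stated auxiliary bound
\[
\int_{\R^n}|\nabla w|^2\gamma^{s-6} \le \epsilon\int_{\R^n}(\Delta w)^2\gamma^{s-4} + C(\epsilon,s)\rho^{-2}\int_{\R^n} w^2\gamma^{s-8}
\]
is false for a free constant $\epsilon$: applying Young to $-\int w(\Delta w)\gamma^{s-6}$ contributes $\frac{1}{4\epsilon}\int w^2\gamma^{s-8}$ with \emph{no} $\rho$-factor. If you proceed with your version, the eventual coefficient on $\int|\Delta^{k-1}u|^2\gamma^{s-8}$ is of order $\rho^{-2}$ rather than the required $\rho^{-4}$, and the lemma is not proved. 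The remedy, which the paper adopts, is to let the Young parameter here scale like $\rho^2$ (the paper takes it equal to $\rho^2/\bigl(64c_\gamma^2(s-4)^2\bigr)$): then, after multiplication by the prefactor $16c_\gamma^2(s-4)^2/\rho^2$ coming from the main estimate, the absorbable $(\Delta w)^2\gamma^{s-4}$ piece has the $\rho$-free coefficient $\tfrac12$, while every $w^2\gamma^{s-8}$ contribution carries a clean $\rho^{-4}$. With this adjustment your outline is complete and matches the paper's proof.
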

\begin{proof}
Integrating by parts,
\begin{align*}
\int_{\R^n} |\Delta^ku|^2\gamma^{s-4}
 &= -\int_{\R^n} (\nabla_i\Delta^ku)(\nabla_i\Delta^{k-1}u)\gamma^{s-4}
\\&\qquad
    -(s-4)\int_{\R^n} (\Delta^ku)(\nabla_i\Delta^{k-1}u)(\nabla_i\gamma)\gamma^{s-5}
\\
 &=  \int_{\R^n} (\Delta^{k+1}u)(\Delta^{k-1}u)\gamma^{s-4}
\\*&\qquad
    + (s-4)\int_{\R^n} (\nabla_i\Delta^ku)(\Delta^{k-1}u)(\nabla_i\gamma)\gamma^{s-5}
\\*&\qquad
    -(s-4)\int_{\R^n} (\Delta^ku)(\nabla_i\Delta^{k-1}u)(\nabla_i\gamma)\gamma^{s-5}
\\
 &=  \int_{\R^n} (\Delta^{k+1}u)(\Delta^{k-1}u)\gamma^{s-4}
\\*&\qquad
    -2(s-4)\int_{\R^n} (\Delta^ku)(\nabla_i\Delta^{k-1}u)(\nabla_i\gamma)\gamma^{s-5}
\\*&\qquad
    - (s-4)\int_{\R^n} (\Delta^ku)(\Delta^{k-1}u)(\Delta\gamma)\gamma^{s-5}
\\*&\qquad
    - (s-4)(s-5)\int_{\R^n} (\Delta^ku)(\Delta^{k-1}u)|\nabla\gamma|^2\gamma^{s-6}\,.
\end{align*}
Estimating the right hand side with Young's inequality (estimate \eqref{EQcauchy}), we have for any $\delta_i>0$
\begin{align*}
\int_{\R^n}& |\Delta^ku|^2\gamma^{s-4}
\\
&\le
  \delta_1\rho^4\int_{\R^n} |\Delta^{k+1}u|^2\gamma^{s}
  + \frac{1}{4\delta_1\rho^4}\int_{\R^n} |\Delta^{k-1}u|^2\gamma^{s-8}
\\*&\qquad
 + \delta_2\int_{\R^n} |\Delta^ku|^2\gamma^{s-4}
  + \frac{c_\gamma^2(s-4)^2}{\delta_2\rho^2}\int_{\R^n} |\nabla\Delta^{k-1}u|^2\gamma^{s-6}
\\*&\qquad
 + \delta_3\int_{\R^n} |\Delta^ku|^2\gamma^{s-4}
  + \frac{c_\gamma^2(s-4)^2}{4\delta_3\rho^4}\int_{\R^n} |\Delta^{k-1}u|^2\gamma^{s-6}
\\*&\qquad
 + \delta_4\int_{\R^n} |\Delta^ku|^2\gamma^{s-4}
  + \frac{c_\gamma^4(s-4)^2(s-5)^2}{4\delta_4\rho^4}\int_{\R^n} |\Delta^{k-1}u|^2\gamma^{s-8}\,,
\end{align*}
which upon absorption gives
\begin{align}
\int_{\R^n}& |\Delta^ku|^2\gamma^{s-4}
\notag\\
&\le
  4\delta_1\rho^4\int_{\R^n} |\Delta^{k+1}u|^2\gamma^{s}
  + \frac{16c_\gamma^2(s-4)^2}{\rho^2}\int_{\R^n} |\nabla\Delta^{k-1}u|^2\gamma^{s-6}
\notag\\*&\qquad
  + \frac{1}{\rho^4}\bigg(
    \frac{1}{\delta_1}
  + 4c_\gamma^2(s-4)^2
  + 4c_\gamma^4(s-4)^2(s-5)^2
    \bigg)\int_{\R^n} |\Delta^{k-1}u|^2\gamma^{s-8}\,,
\label{EQlminterpeq1}
\end{align}
where we have chosen $\delta_2 = \delta_3 = \delta_4 = \frac{1}4$, and
we used $\ga^{s-6}(\cdot) \leq \ga^{s-8}(\cdot)$, which follows in
view of $0 \leq \ga \leq 1$ and $s-6>s-8\geq 1$. 
For the second term we integrate by parts and estimate using Young's inequality to obtain
\begin{align*}
\int_{\R^n} &|\nabla\Delta^{k-1}u|^2\gamma^{s-6}
\\
 &= 
    - \int_{\R^n} (\Delta^{k}u)(\Delta^{k-1}u)\gamma^{s-6}
    - (s-6)\int_{\R^n} (\Delta^{k-1}u)(\nabla_i\Delta^{k-1}u)(\nabla_i\gamma)\gamma^{s-7}
\\
 &\le \frac{\rho^2}{64\cg^2(s-4)^2}\int_{\R^n} |\Delta^{k}u|^2\gamma^{s-4}
     + \frac{16\cg^2(s-4)^2}{\rho^2}\int_{\R^n} |\Delta^{k-1}u|^2\gamma^{s-8}
\\*&\qquad
     + \frac{1}{2}\int_{\R^n} |\nabla\Delta^{k-1}u|^2\gamma^{s-6}
     + \frac{c_\gamma^2(s-6)^2}{2\rho^2}\int_{\R^n} |\Delta^{k-1}u|^2\gamma^{s-8}\,.
\end{align*}
Absorbing the third term on the right into the left yields
\begin{align*}
&\int_{\R^n} |\nabla\Delta^{k-1}u|^2\gamma^{s-6}
 \le \frac{\rho^2}{32\cg^2(s-4)^2}\int_{\R^n} |\Delta^{k}u|^2\gamma^{s-4}
\\*&\qquad
     + \frac{\cg^2}{\rho^2}
       \bigg(32(s-4)^2 + (s-6)^2\bigg)
       \int_{\R^n} |\Delta^{k-1}u|^2\gamma^{s-8}\,,
\intertext{and so}
\frac{16\cg^2(s-4)^2}{\rho^2}&\int_{\R^n} |\nabla\Delta^{k-1}u|^2\gamma^{s-6}
 \le \frac12\int_{\R^n} |\Delta^{k}u|^2\gamma^{s-4}
\\*&\qquad
     + \frac{16\cg^4(s-4)^2}{\rho^4}\bigg(32(s-4)^2
     + (s-6)^2\bigg)
       \int_{\R^n} |\Delta^{k-1}u|^2\gamma^{s-8}\,.
\end{align*}
Combining the above with \eqref{EQlminterpeq1} we have
\begin{align*}
\int_{\R^n}& |\Delta^ku|^2\gamma^{s-4}
\le
  4\delta_1\rho^4\int_{\R^n} |\Delta^{k+1}u|^2\gamma^{s}
  + \frac12\int_{\R^n} |\Delta^ku|^2\gamma^{s-4}
\notag\\*&\qquad
  + \frac{1}{\rho^4}\bigg(
       {16\cg^4(s-4)^2}\bigg(32(s-4)^2 + (s-6)^2\bigg)
\notag\\*&\qquad\qquad
  + \frac{1}{\delta_1}
  + 4c_\gamma^2(s-4)^2
  + 4c_\gamma^4(s-4)^2(s-5)^2
    \bigg)\int_{\R^n} |\Delta^{k-1}u|^2\gamma^{s-8}\,,
\end{align*}
Absorbing the second term on the right into the left, multiplying through by 2 and choosing $\delta_1 = \frac{\delta_0}{8}$ yields the estimate
\begin{equation*}
\int_{\R^n} |\Delta^ku|^2\gamma^{s-4}
 \le \delta_0\rho^4 \int_{\R^n} |\Delta^{k+1}u|^2\gamma^s
   + \frac{\tilde{c}_{\delta_0}}{\rho^4}\int_{\R^n} |\Delta^{k-1}u|^2\gamma^{s-8}\,
\end{equation*}
where
\begin{align*}
\tilde{c}_{\delta_0}
 &=
     32\cg^4(s-4)^2\left(32(s-4)^2 + (s-6)^2\right)
    + \frac{16}{\delta_0} + 8(s-4)^2\left(\cg^2 + \cg^4(s-5)^2\right) 
\\
 &= \frac{16}{\delta_0} + 8(s-4)^2\cg^2\left(1 + \cg^2(s-5)^2 + 4\cg^2\left(32(s-4)^2 + (s-6)^2\right)\right)
 \,.
\end{align*}
Since $s>8$ and $\cg\ge1$ we have
\begin{align*}
\tilde{c}_{\delta_0}
 &\le \frac{16}{\delta_0} + 8s^2\cg^2\left(1 + \cg^2s^2 + 4\cg^2\left(32s^2 + s^2\right)\right)
\\
 &\le \frac{2^4}{\delta_0} + 2^3s^4\cg^2\left(1 + \cg^2 + 2^2\cg^2\left(2^5 + 1\right)\right)
\\
 &\le 2^4(\delta_0^{-1} + 2^9s^4\cg^4)
\\
 &:= c_{\delta_0}\,,
\end{align*}
yielding the constant  claimed.
\end{proof}

\begin{corollary}
\label{LMinterp2}
Suppose $u\in C^\infty_{\text{loc}}(\R^n)$, $k\in\N$,  and $\gamma$, $c_\gamma$ are as in \eqref{EQgamma}.
For all $k\in\N$, $s > 4k$,
\begin{equation*}
\int_{\R^n} |\Delta^ku|^2\gamma^{s-4}
 \le \delta\rho^4 \int_{\R^n} |\Delta^{k+1}u|^2\gamma^s
   + \frac{c}{\rho^{4k-4}}\int_{\R^n} |\Delta u|^2\gamma^{s-4k}\,,
\end{equation*}
where $c(\de,s,n)< \infty$ is a constant depending on $\de,s,n$.
\end{corollary}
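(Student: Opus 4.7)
The plan is to prove the estimate by induction on $k$, using Lemma \ref{LMinterp1} as the one-step reduction and iterating it until the lower-order weighted integral involves only $|\Delta u|^2$.

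For the base case $k=1$ the claim is trivial: since $s-4k = s-4$, the right-hand side already contains $\int_{\R^n}|\Delta u|^2 \gamma^{s-4}$ on its own, so the inequality holds with any $\delta > 0$ and $c=1$.

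For the inductive step ($k\ge 2$), I will first apply Lemma \ref{LMinterp1} at order $k$ with a parameter $\delta_0>0$ to obtain
\[
\int_{\R^n} |\Delta^k u|^2\gamma^{s-4} \le \delta_0\rho^4 \int_{\R^n} |\Delta^{k+1}u|^2\gamma^s + \frac{c_{\delta_0}}{\rho^4}\int_{\R^n}|\Delta^{k-1}u|^2\gamma^{s-8},
\]
and then invoke the inductive hypothesis on the last term with $k-1$ in place of $k$ and $s-4$ in place of $s$. The shifted hypothesis $s-4 > 4(k-1)$ is exactly $s>4k$, the weight $\gamma^{(s-4)-4}$ is $\gamma^{s-8}$, and the terminal weight $\gamma^{(s-4)-4(k-1)}$ is $\gamma^{s-4k}$, so everything telescopes consistently and we obtain, for any $\delta_1>0$,
\[
\int_{\R^n}|\Delta^{k-1}u|^2\gamma^{s-8} \le \delta_1\rho^4 \int_{\R^n} |\Delta^{k}u|^2\gamma^{s-4} + \frac{c_1}{\rho^{4k-8}}\int_{\R^n}|\Delta u|^2\gamma^{s-4k}.
\]

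Substituting the second inequality into the first produces a term $c_{\delta_0}\delta_1 \int_{\R^n}|\Delta^k u|^2\gamma^{s-4}$ on the right-hand side, which I absorb into the left by choosing $\delta_1 = 1/(2c_{\delta_0})$. Multiplying through by $2$ and setting $\delta_0 = \delta/2$ then yields the claimed bound with constant $c = 2c_{\delta_0}c_1$, depending only on $\delta$, $s$, and $n$. The only subtlety in the whole argument is the bookkeeping — checking that the hypothesis $s>4k$ drops to $s-4>4(k-1)$ under one iteration so that the inductive hypothesis applies, and that the weight exponents and $\rho$-powers line up across iterations; no analytic input beyond Lemma \ref{LMinterp1} and the absorption step is required, so this is really the main (and only mild) obstacle.
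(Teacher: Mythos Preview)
Your argument is correct and follows essentially the same route as the paper's: induction on $k$, with the base case $k=1$ trivial and the inductive step obtained by one application of Lemma~\ref{LMinterp1} followed by the inductive hypothesis (with $s$ shifted to $s-4$) and an absorption. The only point you leave implicit is that the statement asks for a constant $c(\delta,s,n)$ independent of $k$; your induction a priori produces $c=c(\delta,s,n,k)$, but since $s>4k$ confines $k$ to finitely many values for each fixed $s$, taking the maximum over these removes the $k$-dependence --- the paper notes this explicitly at the end of its proof.
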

\begin{proof}
We shall proceed by induction in $k \in \N$. We wish to show that
\begin{equation}
E^k_{\gamma^{s-4}}(u)
 \le {\hat \delta}\rho^4 E^{k+1}_{\gamma^s}(u)
   + \frac{\hat c_{\hat \de,k,s} }{\rho^{4k-4}}E^{1}_{\gamma^{s-4k}}(u)
\,.
\label{EQindd2}
\end{equation}
is true for all $s > 4k$ for all $\hat \de > 0$, where
$\hat c_{\hat \de,k,s}= \hat c(\hat \de,s,k,n)$.
Let $k=1$.
Then \eqref{EQindd2} is true for all $s > 4k$ for all $\de>0$ trivially with the
choice of $ \hat c_{\hat \de,k,s}= 1$.

Assume \eqref{EQindd2} for some fixed $k \in \N$, and let $s >
4(k+1)$,$\de >0$  be arbitrary. Then $s > 8$ and
Lemma \ref{LMinterp1} gives the estimate
\begin{equation}
E^l_{\gamma^{s-4}}(u)
 \le
     \delta\rho^4 E^{l+1}_{\gamma^s}(u)
   + \frac{c_{\delta,l,s}}{\rho^4} E^{l-1}_{\gamma^{s-8}}(u)
\label{EQindd1}
\end{equation}
for any $l\in\N$, where $c_{\delta,l,s} = c(\de,l,s,n)$.

Using this inequality with $l = k+1 \in \N$ and then  \eqref{EQindd2} we have
\begin{align*}
E^{k+1}_{\gamma^{s-4}}(u)
 &\le
    \delta\rho^4 E^{k+2}_{\gamma^s}(u)
  + \frac{c_{\delta, {k+1},s } } {\rho^4} E^{k}_{\gamma^{s-8}}(u)
\\
 &\le
     \delta\rho^4 E^{k+2}_{\gamma^s}(u)
   + \hat{\delta} c_{\delta, k+1,s} E^{k+1}_{\gamma^{s-4}}(u)
   + \frac{\hat c_{\hat \delta, k,s-4} c_{\de,k+1,s}}{\rho^{4k}}E^{1}_{\gamma^{s-4-4k}}(u)
\,.
\end{align*}
where here we used the fact that $s > 4(k+1) = 4k +4$ implies that
$\ti s = s -4 > 4k$ and so \eqref{EQindd2}  is valid with $s$
replaced by $\ti s$.
Choosing
$\hat{\delta} c_{\delta,{k+1},s} = \frac12$ and absorbing we obtain
\begin{align*}
E^{k+1}_{\gamma^{s-4}}(u)
 &\le
     2\delta \rho^4 E^{k+2}_{\gamma^s}(u)
+ 2\frac{\hat c_{\hat \delta, k,s-4} c_{\de,k+1,s}  }{  \rho^{4k}  }E^{1}_{\gamma^{s-4-4k}}(u)
\,.
\end{align*}
which gives us the result, as $\de>0$ was arbitrary.
Note that we can ensure the constant only depends on $n,s$ and not $k$
by taking the supremum of the constants we obtained in this argument,
where this supremum is taken over all $4k<s$, $k \in \N$, for a fixed $s \in
\N$.
\end{proof}




\section*{Acknowledgements}
The first author would like to thank the University of Wollongong,
where part of this work was carried out.
The second author was partially supported by Alexander-von-Humboldt
fellowship 1137814 at the Otto-von-Guericke Universität Magdeburg and
by Australian Research Council Discovery Project grant DP120100097 at
the University of Wollongong. Further partial support toward both authors was provided by University of Wollongong Research Council 20124 Small Grant 22831024. They are grateful for their support.

\bibliographystyle{plain}

\end{document}